\newtheorem{theorem}{Theorem}[section]
\newtheorem{lemma}[theorem]{Lemma}
\newtheorem{definition}{Definition}[section]
\newtheorem{proposition}[theorem]{Proposition}
\newcommand{\ov}{\overline}
\newcommand{\p}{p }
\newcommand{\eq}[1]{\stackrel{\mathclap{\normalfont\mbox{{\tiny #1}}}}{=}}
\newcommand{\impli}[1]{\stackrel{\mathclap{\normalfont\mbox{{\tiny #1}}}}{\Rightarrow}}
\author{J. P. Fatelo and N. Martins-Ferreira}
\address{School of Technology and Management, Centre for Rapid and Sustainable Product Development - CDRSP, Polytechnic Institute of Leiria, P-2411-901 Leiria, Portugal.}
\email{martins.ferreira@ipleiria.pt}
\title[]{Reconstructing Classical Algebras via Ternary Operations}
\subjclass[2020]{Primary 06E05, 
                          06D30, 06D35; 
								 Secondary 03G25}
\keywords{Boolean algebras, MV-algebras, de Morgan algebras, ternary operations, rings and near rings of characteristic two.}
\thanks{This research work was supported by the Portuguese Foundation for Science and Technology FCT/MCTES (PIDDAC) through the following Projects: Associate Laboratory ARISE LA/P/0112/2020; UIDP/04044/2020; UIDB/04044/2020; PAMI - ROTEIRO/0328/2013 (N° 022158); MATIS (CENTRO-01-0145-FEDER-000014 - 3362); DOI: 10.54499/UIDB/04044/2020; Generative.Thermodynamic; FruitPV; by CDRSP and ESTG from the Polytechnic Institute of Leiria.
}
\begin{document}

\begin{abstract}
Although algebraic structures are frequently analyzed using unary and binary operations, they can also be effectively defined and unified through ternary operations. In this context, we introduce structures that contain two constants and a ternary operation. We demonstrate that these structures are isomorphic to various significant algebraic systems, including Boolean algebras, de Morgan algebras, MV-algebras, and (near) rings of characteristic two. Our work highlights the versatility of ternary operations in describing and connecting diverse algebraic structures.
\end{abstract}

\maketitle

\section{Introduction}
Ternary Boolean algebras \cite{Grau} were introduced by Grau in 1947 to axiomatize Boolean algebras by means of the ternary operation $xy\circ yz\circ zx$ (see also~\cite{Pad}). In the same year this operation was used independently by Birkhoff and Kiss~\cite{Birk} to characterize distributive lattices. Both approaches are particular cases of median algebras \cite{Bandelt, Isbell, Sholander}.  
Although the set of axioms  is distinct in each case, complete commutativity is a common feature. 
In 1948, Church \cite{ChurchPM} shows that it is possible to axiomatize Boolean algebras in terms of the conditioned disjunction $\bar{y}x\circ yz$ which is not completely commutative (see also \cite{hoare}). 

In this article, we explore the axiomatization of Boolean algebras, de Morgan algebras, MV-algebras, and rings or near rings of characteristic two using ternary structures. By a ternary structure we mean an algebraic system consisting of a set $A$ with two constants, $0$ and $1$, and a ternary operation $p(x,y,z)\in A$. This ternary operation gives rise to derived unary and binary operations, and each formula specifying $p$ in terms of these operations corresponds to a new axiom, leading to subvarieties of the original structure.
Interestingly, each classical structure considered here has a characteristic expression that determines the ternary operation $p$ using its derived binary operations. The purpose of this paper is to examine these expressions and their implications for the unification of classical algebras within the framework of ternary structures.

For example, within the set of axioms \ref{C1}--\ref{C3}, introduced in Section \ref{section:preliminaries}, Boolean algebras form a subvariety if and only if $p(x,y,z)$ is interpreted as Church's conditioned disjunction, as proved in Section \ref{section:Boolean}.
Section \ref{section:DM} gives the formula for $p$ that turns the \ref{C1}--\ref{C3} structure into a de Morgan algebra, while Section \ref{section:rings} discusses the cases of rings and near rings of characteristic 2. In Section \ref{section:MV}, it is shown how to modify the axioms
\ref{C1}--\ref{C3} so that MV-algebras can be characterized with a ternary structure and the corresponding ternary operation is given. Finally, conclusions are draw in Section \ref{section:conclusion} where a table is presented with a comparison between the cases considered here.

Further observations and some more details are presented in the preprints~\cite{preprint-TBA} and \cite{preprint-TBA2}.
\section{Preliminaries}\label{section:preliminaries}   

Let us begin by introducing the principal notations used for the unary and binary operations derived from a general structure with one ternary operation $p$ and two constants $0$ and $1$.
\begin{definition}\label{definition:1}
Let $(A,p,0,1)$ be a system consisting of a set $A$, together with a ternary operation $p$ and two constants $0,1\in A$. From $p$, the following operations are defined:
\begin{eqnarray}
\label{def:bar}\bar a      &=&p(1,a,0)\\
\label{def:cdot}a\cdot b   &=&p(0,a,b)\\
\label{def:circ}a\circ b   &=&p(a,b,1)\\
\label{def:wedge}a\wedge b &=&p(b,\bar a,0)\\
\label{def:vee}a\vee b     &=&p(1,\bar b,a)\\
\label{def:plus}a+b        &=&p(a,b,\bar a).
\end{eqnarray}
\end{definition}

Next, we present an algebraic structure $(A,p,0,1)$ satisfying four axioms. Axiom~\ref{C4} alone defines a Church algebra~\cite{Salibra2,Salibra1} while a Menger algebra of rank 2 (see e.g. \cite{menger}) uses axiom \ref{C3}.
The axioms \ref{C1}, \ref{C4} and \ref{C3} have been used to define  proposition algebras~\cite{Ponse} while axioms \ref{C4}, \ref{C2} and \ref{C3}, among others, have been used to study spaces with geodesic paths~\cite{mobi,affine,mobi2sphere}. Lemma \ref{lemma:1} below displays basic consequences of \ref{C1}--\ref{C3}.
\begin{lemma}\label{lemma:1}
Let $(A,p,0,1)$ be a system consisting of a set $A$, together with a ternary operation $p$ and two constants $0,1\in A$ satisfying:
\begin{enumerate}[label={\bf (T\arabic*)}]
\item\label{C1}\label{B1}\label{smA2} $p(0,a,1)=a$
\item\label{C4}\label{B4}\label{smA4}\label{smA5} $p(a,0,b)=a=p(b,1,a)$
\item\label{C2}\label{B2}\label{smA3} $p(a,b,a)=a$
\item\label{C3}\label{B3}\label{smA7} $p(a,p(b_1,b_2,b_3),c)=
p(p(a,b_1,c),b_2,p(a,b_3,c))$.
\end{enumerate}
Then, the following properties hold:
\begin{eqnarray}
\label{L1}\bar{1}=0 &,& \bar{0}=1\\
\label{L2} \overline{\overline{a}}&=&a\\
\label{L3} p(c,b,a)&=&p(a,\bar{b},c)\\
\label{L4} \overline{p(a,b,c)}&=&p(\overline{a},b,\overline{c})\\
\label{L5} \overline{p(a,b,c)}&=&p(\overline{c},\overline{b},\overline{a})\\
\label{L=} a\cdot b=a\wedge b&,& a\circ b=a\vee b\\
\label{L6} \overline{a\cdot b}=\overline{b}\circ \overline{a}&,&\overline{a\circ b}=\overline{b}\cdot \overline{a}\\
\label{L7}\label{L8} (A,\cdot,1)\ \textrm{and}\ (A,\circ,0)\ &\textrm{are}&\textrm{monoids}\\
\label{L9} a\cdot 0=&0&=0\cdot a\\
\label{L10} a\circ 1=&1&=1\circ a\\
\label{monoid-plus}(A,+,0)&\textrm{is}& \textrm{a monoid}\\
\label{L11} a+1=&\bar a&=1+a
\end{eqnarray}
\end{lemma}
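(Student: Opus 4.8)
The plan is to establish the identities in the order listed, since each relies on the ones before it, and to isolate the associativity of $+$ as the single genuinely delicate point. Everything else will reduce to the distributivity axiom \ref{C3} together with the reflection identity \eqref{L3}.

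First I would dispatch \eqref{L1} directly: $\bar 1=p(1,1,0)$ and $\bar 0=p(1,0,0)$ are read off immediately from the two halves of \ref{C4}. For \eqref{L2} I would write $\overline{\overline a}=p(1,p(1,a,0),0)$ and apply \ref{C3} to pull the outer $p(1,-,0)$ through the middle argument, obtaining $p(p(1,1,0),a,p(1,0,0))=p(0,a,1)=a$ by \eqref{L1} and \ref{C1}. The crucial reflection identity \eqref{L3}, namely $p(c,b,a)=p(a,\bar b,c)$, is the workhorse for what follows: expanding $p(a,\bar b,c)=p(a,p(1,b,0),c)$ by \ref{C3} gives $p(p(a,1,c),b,p(a,0,c))$, and the two outer entries collapse to $c$ and $a$ respectively by the two halves of \ref{C4}. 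Identity \eqref{L4} follows by the same distributive move applied to $\overline{p(a,b,c)}=p(1,p(a,b,c),0)$, and \eqref{L5} is then just \eqref{L4} followed by \eqref{L3}.

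With these in hand the comparisons \eqref{L=} are immediate: $a\wedge b=p(b,\bar a,0)=p(0,a,b)=a\cdot b$ by \eqref{L3} and \eqref{L2}, and dually for $\vee$ and $\circ$. The De Morgan relations \eqref{L6} come from \eqref{L5} together with \eqref{L1}. For the monoid claims \eqref{L7}, the unit laws are read off from \ref{C1} and \ref{C4}, while \eqref{L9} and \eqref{L10} use \ref{C2}; associativity of $\cdot$ follows from \ref{C3} after noting $p(0,0,c)=0$, and associativity of $\circ$ I would obtain for free by dualizing that of $\cdot$ through the involution $a\mapsto\bar a$, using \eqref{L6} and \eqref{L2}, rather than repeating the computation.

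The substantive step is the additive monoid \eqref{monoid-plus}. The unit laws $a+0=a$ and $0+a=a$ are immediate from \ref{C4} and \ref{C1} (with \eqref{L1}), and \eqref{L11} follows likewise from \ref{C4}. Associativity of $+$ is where the real work lies, and the trick I would use is to expand the bracketing that exposes a composite in the middle slot: writing $a+(b+c)=p(a,p(b,c,\bar b),\bar a)$ and applying \ref{C3} yields $p(p(a,b,\bar a),c,p(a,\bar b,\bar a))=p(a+b,\,c,\,a+\bar b)$. The point is then to recognize $a+\bar b=\overline{a+b}$, which one checks from \eqref{L3}, \eqref{L4} and \eqref{L2}, so that the right-hand side becomes $p(a+b,c,\overline{a+b})=(a+b)+c$ by the definition of $+$. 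I expect this identification $a+\bar b=\overline{a+b}$, together with the choice to distribute over $a+(b+c)$ rather than over $(a+b)+c$, to be the one genuinely nonobvious move in the whole lemma.
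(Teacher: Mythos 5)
Your proposal is correct and follows essentially the same route as the paper's proof: the same use of \ref{C3} to establish \eqref{L2}--\eqref{L4}, the same reduction of \eqref{L=} and \eqref{L6} to \eqref{L3} and \eqref{L5}, and the same key step for associativity of $+$ --- your identity $a+\bar b=\overline{a+b}$ is exactly the paper's application of \eqref{L5} to $\overline{p(a,b,\bar a)}$, just read from the bracketing $a+(b+c)$ instead of $(a+b)+c$. The only cosmetic deviations are that you dualize associativity of $\circ$ through the involution via \eqref{L6} rather than computing it directly (which is valid), and your citation of \ref{C4} for $a+1=\bar a$ is in fact the accurate one.
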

\begin{proof}
In each step of the proof, the needed property when required is written above the corresponding equality.
$$\begin{array}{l}
\bar{1}=p(1,1,0)\ \eq{\ref{C4}}\ 0,\quad \bar{0}=p(1,0,0)\ \eq{\ref{C4}}\ 1\\[5pt]
\overline{\overline{a}}=p(1,p(1,a,0),0)\ \eq{\ref{C3}}\ p(p(1,1,0),a,p(1,0,0))\ \eq{\ref{C4}}\ p(0,a,1)\ \eq{\ref{C1}}\ a\\[5pt]
p(a,\overline{b},c)=p(a,p(1,b,0),c)\ \eq{\ref{C3}}\ p(p(a,1,c),b,p(a,0,c))\ \eq{\ref{C4}}\ p(c,b,a)\\[5pt]
\overline{p(a,b,c)}=p(1,p(a,b,c),0)\ \eq{\ref{C3}}\ p(p(1,a,0),b,p(1,c,0))=p(\overline{a},b,\overline{c}).
\end{array}$$
Property (\ref{L5}) is just a combination of (\ref{L3}) and (\ref{L4}), whereas (\ref{L=}) is just a particular case of  (\ref{L3}). Next is the proof of Properties (\ref{L6}):
$$\begin{array}{l}
\overline{a\cdot b}=\overline{p(0,a,b)}\ \eq{(\ref{L5})}\ p(\bar{b},\bar{a},\bar{0})\ \eq{(\ref{L1})}\ p(\bar{b},\bar{a},1)
=\bar{b}\circ\bar{a} \\[5pt]
\overline{a\circ b}=\overline{p(a,b,1)}\ \eq{(\ref{L5})}\ p(\bar{1},\bar{b},\bar{a})\ \eq{(\ref{L1})}\ p(0,\bar{b},\bar{a})
=\bar{b}\cdot\bar{a}.
\end{array}$$
With respect to (\ref{L7}), we have associativity
$$\begin{array}{rcccl}
(a\cdot b)\cdot c&=&p(0,p(0,a,b),c)&\eq{\ref{C3}}& p(p(0,0,c),a,p(0,b,c))\\[5pt]
                   &\eq{\ref{C4}}&p(0,a,p(0,b,c))&=&a\cdot(b\cdot c)\\[5pt]
a\circ(b\circ c)&=&p(a,p(b,c,1),1)&\eq{\ref{C3}}& p(p(a,b,1),c,p(a,1,1))\\[5pt]
              &\eq{\ref{C4}}&p(p(a,b,1),c,1)&=&(a\circ b)\circ c,
\end{array}$$
and identities
$$\begin{array}{l}
a\cdot 1=p(0,a,1)\ \eq{\ref{C1}}\ a,\quad 1\cdot a=p(0,1,a)\ \eq{\ref{C4}}\ a\\[5pt]
a\circ 0=p(a,0,1)\ \eq{\ref{C4}}\ a,\quad 0\circ a=p(0,a,1)\ \eq{\ref{C1}}\ a.
\end{array}$$
For properties (\ref{L9}) and (\ref{L10}), the proof is:
$$\begin{array}{l}
a\cdot 0=p(0,a,0)\ \eq{\ref{C2}}\ 0,\quad 0\cdot a=p(0,0,a)\ \eq{\ref{C4}}\ 0\\[5pt]
a\circ 1=p(a,1,1)\ \eq{\ref{C4}}\ 1,\quad 1\circ a=p(1,a,1)\ \eq{\ref{C2}}\ 1.
\end{array}$$
The structure $(A,+,0)$ is a monoid since:
$$a+0=p(a,0,\bar a)\ \eq{\ref{C4}}\ a,\quad
0+a=p(0,a,1)\ \eq{\ref{C1}}\ a$$
\begin{eqnarray*}
(a+b)+c&=&p(p(a,b,\bar a),c,\overline{p(a,b,\bar a})\ \eq{(\ref{L5})}\ p(p(a,b,\bar a),c,p(a,\bar b,\bar a))\\
			 &\eq{\ref{C3}}& p(a,p(b,c,\bar b),\bar a)=a+(b+c).
\end{eqnarray*}
Furthermore $a+1=p(a,1,\bar a)\ \eq{\ref{C2}}\ \bar a$ and $1+a=p(1,a,0)=\bar a$ which proves (\ref{L11}). In particular, $1+1=0$.
\end{proof}

The following lemmas investigate the consequences of some classic extra conditions on the binary operations besides Axioms \ref{C1} to~\ref{C3}. Note that the de Morgan's laws (\ref{L6}) imply a duality between~$\cdot$ and $\circ$. In particular, $\cdot$ is idempotent if and only if $\circ$ is idempotent too. 
\begin{lemma}\label{lemma:idem}
Let $(A,p,0,1)$ be a system verifying conditions \ref{C1} to~\ref{C3}. If the operation $\circ$ is idempotent then we have the following absorption rules:
\begin{equation} \label{abs}
a\circ(b\cdot a)=a\quad \textrm{and}\quad ( a \circ b)\cdot a=a.
\end{equation}
\end{lemma}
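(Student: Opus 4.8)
The plan is to reduce each absorption identity to an instance of the idempotency-type axiom \ref{C2}, namely $p(a,b,a)=a$, by first rewriting the nested binary operation in terms of $p$ and then exploiting the exchange law \ref{C3} to distribute the outer $p$ over the inner one. Before starting, I would record that idempotency of $\circ$ forces idempotency of $\cdot$: applying the de Morgan law (\ref{L6}) to $a\circ a=a$ gives $\overline{a}\cdot\overline{a}=\overline{a\circ a}=\overline{a}$, and since $a\mapsto\overline{a}$ is an involution by (\ref{L2}), this yields $b\cdot b=b$ for every $b$. Thus both derived binary operations are idempotent, which is what the computations will consume.

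For the first identity $a\circ(b\cdot a)=a$, I would expand via the definitions (\ref{def:cdot}) and (\ref{def:circ}) to obtain $a\circ(b\cdot a)=p(a,p(0,b,a),1)$. Applying \ref{C3} with outer arguments $a$ and $1$ distributes this as $p(p(a,0,1),b,p(a,a,1))$. The two outer entries then collapse: $p(a,0,1)=a$ by \ref{C4}, while $p(a,a,1)=a\circ a=a$ by idempotency of $\circ$. What survives is $p(a,b,a)$, which equals $a$ by \ref{C2}.

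For the second identity $(a\circ b)\cdot a=a$, the same strategy applies dually. Expanding gives $(a\circ b)\cdot a=p(0,p(a,b,1),a)$, and \ref{C3} distributes this into $p(p(0,a,a),b,p(0,1,a))$. Here $p(0,a,a)=a\cdot a=a$ by idempotency of $\cdot$ (established above) and $p(0,1,a)=a$ by \ref{C4}, so once more we are left with $p(a,b,a)=a$ via \ref{C2}.

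The only genuine decision --- and the step I expect to be the crux --- is choosing the correct instance of \ref{C3}: one must see that placing the inner expression in the middle slot while keeping $a$ as one of the outer arguments is precisely what makes both surviving outer entries reduce (one by \ref{C4}, one by idempotency) to the repeated-argument pattern $p(a,b,a)$. Everything after that is forced: no induction or case analysis is needed, and each identity is settled by a single application of the exchange law followed by two unit/idempotency simplifications.
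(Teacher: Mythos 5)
Your proof is correct, and its core mechanism is the same as the paper's: expand the nested binary operation, apply \ref{C3} so that the outer entries collapse via \ref{C4} and idempotency, and finish with \ref{C2}; your treatment of the first identity $a\circ(b\cdot a)=a$ coincides step for step with the paper's. The only divergence is the second identity: the paper dispatches $(a\circ b)\cdot a=a$ in one line by applying the de Morgan laws (\ref{L6}) to the first identity, whereas you run the mirror-image computation directly, which is why you needed to first derive idempotency of $\cdot$ from that of $\circ$ --- your derivation via (\ref{L6}) and the involution (\ref{L2}) is precisely the duality remark the paper records just before the lemma. Both routes are equally valid; the paper's appeal to duality saves the explicit second computation, while yours keeps the proof self-contained at the cost of one extra (but straightforward) application of \ref{C3}.
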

\begin{proof}
Idempotency means that
\begin{equation}\label{idemDM}
p(0,a,a)=a\cdot a=a\quad \textrm{and}\quad p(a,a,1)=a\circ a=a,
\end{equation}
and consequently:
$$\begin{array}{rcccl}
a\circ(b\cdot a)&=&\p(a,\p(0,b,a),1)&\eq{\ref{smA7}}&\p(\p(a,0,1),b,\p(a,a,1))\\[5pt]
           &\eq{\ref{smA4}}&\p(a,b,\p(a,a,1))&\eq{(\ref{idemDM})}&\p(a,b,a)\ \eq{\ref{smA3}}\ a.
\end{array}$$
The second equality in (\ref{abs}) follows by application of (\ref{L6}).
\end{proof}

When commutativity is added, distributivity is obtained.
\begin{lemma}\label{lemma:2}
Let $(A,p,0,1)$ be a system verifying conditions \ref{C1} to~\ref{C3}. If the operations $\cdot$ and $\circ$ are commutative and idempotent then they distribute over each other.
\end{lemma}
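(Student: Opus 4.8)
The plan is to prove the two distributive laws $a\cdot(b\circ c)=(a\cdot b)\circ(a\cdot c)$ and $a\circ(b\cdot c)=(a\circ b)\cdot(a\circ c)$ by showing that, in each identity, \emph{both} the left-hand side and the right-hand side collapse to one and the same ternary expression. Since the de Morgan laws (\ref{L6}) transform the first identity into the second under complementation (replacing $a,b,c$ by $\bar a,\bar b,\bar c$ and using involutivity (\ref{L2})), one could in principle establish a single law; nevertheless the two computations are perfectly parallel, so I would carry out both directly. At the outset I would record that, because $\circ$ (hence $\cdot$) is idempotent, Lemma \ref{lemma:idem} supplies the absorption rules (\ref{abs}), which together with commutativity give $(a\cdot b)\circ a=a$ and $a\cdot(a\circ c)=a$; these are exactly the facts that drive the final simplification.

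For the first law I would begin from the right-hand side, using definition (\ref{def:cdot}) together with commutativity of $\cdot$ to write $(a\cdot b)\circ(a\cdot c)=p(a\cdot b,\,p(0,c,a),\,1)$, so that the compound factor sits in the \emph{central} slot. Axiom \ref{C3} then expands this to $p(p(a\cdot b,0,1),\,c,\,p(a\cdot b,a,1))$, whose outer entries collapse, by \ref{C4} and the definition (\ref{def:circ}), to $a\cdot b$ and $(a\cdot b)\circ a$; the latter is $a$ by absorption, leaving $p(a\cdot b,c,a)$. Running the same manoeuvre on the left-hand side, $a\cdot(b\circ c)=(b\circ c)\cdot a=p(0,\,p(b,c,1),\,a)$ expands by \ref{C3} to $p(p(0,b,a),\,c,\,p(0,1,a))$, which is again $p(a\cdot b,c,a)$ after applying \ref{C4} (so that $p(0,1,a)=a$) and commutativity of $\cdot$. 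Both sides now agree, proving the identity. The second law is handled symmetrically: $a\circ(b\cdot c)$ and $(a\circ b)\cdot(a\circ c)$ are each reduced, via \ref{C3}, \ref{C4} and the other absorption rule $a\cdot(a\circ c)=a$, to the common value $p(a,b,a\circ c)$.

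The one genuinely delicate point — and the step I expect to be the main obstacle — is arranging for \ref{C3} to apply at all: that axiom only expands a ternary term occupying the \emph{central} argument of an outer $p$, whereas the definitions (\ref{def:cdot}) and (\ref{def:circ}) naturally place the compound factor in the first or third slot. The role of commutativity is precisely to relocate the relevant factor into the middle position (as in rewriting $a\cdot c$ as $p(0,c,a)$ inside $(a\cdot b)\circ(\,\cdot\,)$); once this is done, \ref{C3} fires cleanly and the boundary terms are annihilated by \ref{C4} and absorption. I would therefore track carefully which instance of commutativity is invoked at each rewrite, since a misplaced factor is exactly where the reduction would silently fail.
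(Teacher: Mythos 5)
Your proposal is correct and is essentially the paper's own proof: the same ingredients appear in the same roles (the absorption rule from Lemma \ref{lemma:idem} adjusted by commutativity, expansion via \ref{C3} with the compound factor moved into the middle slot, and collapse of the boundary slots via \ref{C4} and $p(0,1,a)=a$), and your common middle term $p(a\cdot b,c,a)$ is exactly the intermediate expression $p(p(0,b,a),c,a)$ through which the paper's computation passes. The only difference is presentational: you reduce both sides to a shared ternary expression, whereas the paper rewrites $(b\cdot a)\circ(c\cdot a)$ into $(b\circ c)\cdot a$ in a single chain, which amounts to running your second half backwards.
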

\begin{proof}
When $\circ$ is commutative, the first absorption rule in (\ref{abs}) may be written as:
\begin{equation}\label{absorption}
(b\cdot a)\circ a=\p(\p(0,b,a),a,1)=a.
\end{equation}
Then, we have:
\begin{eqnarray*}
(b\cdot a)\circ (c\cdot a)&=&\p(\p(0,b,a),\p(0,c,a),1)\\
                    &\eq{\ref{smA7}}&\p(\p(\p(0,b,a),0,1),c,\p(\p(0,b,a),a,1))\\
                    &\eq{\ref{smA4}}&\p(\p(0,b,a),c,\p(\p(0,b,a),a,1))\\
										&\eq{(\ref{absorption})}&\p(\p(0,b,a),c,a)\\
										&\eq{\ref{smA5}}&\p(\p(0,b,a),c,\p(0,1,a))\\
										&\eq{\ref{smA7}}&\p(0,\p(b,c,1),a)=(b\circ c)\cdot a.
\end{eqnarray*}
Similarly, using
$(a\circ b)\cdot a=a$, we get $(a\circ b)\cdot(a\circ c)=a\circ(b\cdot c).$
\end{proof}

\begin{lemma}\label{lemma:a+a}
Let $(A,p,0,1)$ be a system verifying conditions \ref{C1} to~\ref{C3}. If $a+a=0$ then the operation $+$ is commutative and the operation $\cdot$ is right distributive over $+$:
\begin{equation}
a+a=0\Rightarrow a+b=b+a\ \textrm{and}\ (a+b)\cdot c=a\cdot c+b\cdot c.
\end{equation}
\end{lemma}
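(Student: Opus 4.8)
The plan is to treat the two claims in turn, establishing commutativity first because the distributivity argument will rely on it. Since $(A,+,0)$ is a monoid by \ref{monoid-plus} and the hypothesis $x+x=0$ makes every element its own inverse, $(A,+,0)$ is in fact a group. Applying the hypothesis to $x=a+b$ gives $(a+b)+(a+b)=0$, while $(a+b)+(b+a)=a+(b+b)+a=a+a=0$; thus both $a+b$ and $b+a$ are inverses of $a+b$, and uniqueness of inverses forces $a+b=b+a$.

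For right distributivity I would expand both sides with \ref{C3}. For the left side,
$$(a+b)\cdot c=p(0,p(a,b,\bar a),c)\eq{\ref{C3}}p(p(0,a,c),b,p(0,\bar a,c))=p(a\cdot c,b,\bar a\cdot c).$$
For the right side I would write $b\cdot c=p(0,b,c)$, apply \ref{C3} with $a\cdot c$ and $\overline{a\cdot c}$ as the outer entries, and collapse the first inner entry $p(a\cdot c,0,\overline{a\cdot c})$ to $a\cdot c$ by \ref{C4}; recognising the surviving third entry $p(a\cdot c,c,\overline{a\cdot c})$ as $a\cdot c+c$ (the definition of $+$), this yields
$$a\cdot c+b\cdot c=p(a\cdot c,b,a\cdot c+c).$$

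Comparing the two displays, both sides have the common shape $p(a\cdot c,b,-)$, so the entire statement reduces to the single identity $a\cdot c+c=\bar a\cdot c$. Seeing this reduction is, I expect, the main obstacle: one has to notice that the middle argument $b$ is carried along unchanged, that \ref{C3} manufactures the required occurrence of $b\cdot c$ for free, and that consequently only the third slots remain to be matched. Note that the argument is inherently one-sided, consistent with the near-ring setting: nothing here would deliver left distributivity.

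Finally I would prove $a\cdot c+c=\bar a\cdot c$, and this is where the hypothesis is used decisively. Using the commutativity just established to rewrite the left side as $c+a\cdot c=p(c,p(0,a,c),\bar c)$ places the compound term in the \emph{middle} slot, so \ref{C3} factors it as $p(p(c,0,\bar c),a,p(c,c,\bar c))$; the outer entry simplifies to $c$ by \ref{C4}, and the hypothesis kills the inner entry since $p(c,c,\bar c)=c+c=0$. This leaves $p(c,a,0)$, which equals $\bar a\cdot c=p(0,\bar a,c)$ by \ref{L3}, completing the argument.
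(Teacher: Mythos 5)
Your proposal is correct and takes essentially the same route as the paper's proof: the same uniqueness-of-inverses argument in the monoid $(A,+,0)$ for commutativity, and for distributivity the same \ref{C3}/\ref{C4} expansions, the same use of commutativity to flip $a\cdot c+c$ into $c+a\cdot c$, the hypothesis killing $p(c,c,\bar c)$, and (\ref{L3}) to convert $p(c,a,0)$ into $\bar a\cdot c$. The only difference is presentational: you meet in the middle at the common form $p(a\cdot c,b,-)$, whereas the paper writes one continuous chain from $a\cdot c+b\cdot c$ to $(a+b)\cdot c$ passing through exactly those intermediate expressions.
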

\begin{proof}
$a+a=0$ implies, using (\ref{monoid-plus}), $(a+b)+(b+a)=0$ and therefore $a+b=b+a$. Right distributivity of $\cdot$ over $+$ can be proved as follows:
\begin{eqnarray*}
(a\cdot c)+(b\cdot c)&\quad=\quad&p(a\cdot c,p(0,b,c),\overline{a\cdot c})\\
                   &\eq{\ref{C3},\ref{C4}}&p(a\cdot c,b,p(a\cdot c,c,\overline{a\cdot c}))\\
									 &=&p(a\cdot c,b,(a\cdot c)+c)\\
									 &=&p(a\cdot c,b,c+(a\cdot c))\\
									 &=&p(a\cdot c,b,p(c,p(0,a,c),\bar c))\\
									 &\eq{\ref{C3},\ref{C4}}&p(a\cdot c,b,p(c,a,p(c,c,\bar c))\\
									 &=&p(p(0,a,c),b,p(c,a,0))\\
									 &\eq{(\ref{L3})}&p(p(0,a,c),b,p(0,\bar a,c))\\
                   &\eq{\ref{C3}}&p(0,p(a,b,\bar a),c)=(a+b)\cdot c.
\end{eqnarray*} 
\end{proof}
\noindent It is worth noting that left distributivity of $\cdot$ over $+$ is not guaranteed. On the dual side, we have left distributivity of $\circ$ over the binary operation $a*b=p(\bar a,b,a)=\bar a+b$ when $a*a=1$ and no right distributivity guaranteed.

In addition to the structure defined by conditions \ref{C1}--\ref{C3}, in the following lemma \mbox{$\bar a=p(1,a,0)$} is assumed to be the Boolean complement of $a\in A$. When this is the case $\cdot$ and~$\circ$ are idempotent.
\begin{lemma}\label{lemma:complement}
Let $(A,p,0,1)$ be a system verifying conditions \ref{C1} to~\ref{C3}. If every $a\in A$ verifies the relations
\begin{equation}\label{complement}
\bar a\cdot a=p(0,\bar a,a)=0\quad \textrm{and}\quad \bar a\circ a=p(\bar a,a,1)=1,
\end{equation}
then idempotency holds:
\begin{equation}\label{idem}
p(0,a,a)=a\cdot a=a\quad \textrm{and}\quad p(a,a,1)=a\circ a=a.
\end{equation}
\end{lemma}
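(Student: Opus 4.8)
The plan is to prove only one of the two idempotency laws, say $a\cdot a=a$, and to deduce the other, $a\circ a=a$, from it by de Morgan duality. Indeed, the de Morgan law (\ref{L6}) gives $\overline{a\circ a}=\bar a\cdot\bar a$; once $\cdot$ is known to be idempotent, applying this to $\bar a$ yields $\overline{a\circ a}=\bar a$, and taking complements together with (\ref{L2}) gives $a\circ a=a$. So the whole weight of the lemma falls on establishing $p(0,a,a)=a$.

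For that core step, the idea is to feed the two hypotheses (\ref{complement}) into the axiom \ref{C3} in exactly the right way. First I would start from the identity $a=p(0,1,a)$, which is just \ref{C4}. Then I would use the second relation in (\ref{complement}), namely $p(\bar a,a,1)=1$, to rewrite the central constant $1$, obtaining $a=p(0,p(\bar a,a,1),a)$. Applying \ref{C3} to the inner ternary term splits this into $p(p(0,\bar a,a),a,p(0,1,a))$. Now the first relation in (\ref{complement}), $p(0,\bar a,a)=0$, collapses the first slot to $0$, while \ref{C4} collapses the last slot back to $a$, leaving exactly $p(0,a,a)=a\cdot a$. Hence $a=a\cdot a$, as required.

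The main obstacle, and the reason some care is needed, is that most direct manipulations of $p(0,a,a)$ using only the structural identities (\ref{L1})--(\ref{L11}) simply reproduce the hypotheses or run into tautologies, since the complement relations are the only genuinely new information available. The trick is therefore to choose a substitution that makes \ref{C3} expose both complement relations simultaneously: rewriting the constant $1$ (rather than an occurrence of $a$) inside the expansion $a=p(0,1,a)$ is precisely what arranges for the two outer slots produced by \ref{C3} to match the left-hand sides of the two equations in (\ref{complement}). Once this substitution is found, the computation is a short and purely mechanical chain, and the duality argument of the first paragraph then finishes the proof.
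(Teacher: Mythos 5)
Your proof is correct and is essentially the paper's own argument: your chain $a=p(0,1,a)=p(0,p(\bar a,a,1),a)=p(p(0,\bar a,a),a,p(0,1,a))=p(0,a,a)$ is exactly the paper's computation read in the reverse direction, using \ref{C3} with the same instantiation together with both relations of (\ref{complement}) and \ref{C4}. Your deduction of $a\circ a=a$ from $a\cdot a=a$ via (\ref{L6}) and (\ref{L2}) is likewise one of the two routes the paper explicitly indicates.
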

\begin{proof}
$$\begin{array}{rcccl}
a\cdot a&=&p(0,a,a)\quad \eq{(\ref{complement}),\ref{C4}}\quad p(p(0,\bar a,a),a,p(0,1,a))\\[5pt]
         &\eq{\ref{C3}}&\hspace{-23pt}p(0,p(\bar a,a,1),a)\ \eq{(\ref{complement})}\ p(0,1,a)\ \eq{\ref{C4}}\ a
\end{array}$$
Idempotency of $\circ$ is obtained similarly or using (\ref{L6}).
\end{proof}

\section{Boolean algebra}\label{section:Boolean}   

The next proposition shows how  the structure of axioms \ref{C1}--\ref{C3} can be turned into a Boolean ring. Recall that the notation $a+b$ is being used for $p(a,b,\bar a)$.
\begin{proposition}\label{theorem:ring}
If $(A,p,0,1)$ verifies conditions \ref{C1} to~\ref{C3} and if
\begin{equation}\label{T3-2}
p(0,a,b)=p(a,a,b)
\end{equation}
then:
\begin{equation}
\label{ring}(A,+,\cdot,0,1)\ \textrm{is a Boolean ring}.  
\end{equation}
\end{proposition}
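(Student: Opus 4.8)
The plan is to verify the Boolean-ring axioms for $(A,+,\cdot,0,1)$, drawing on Lemma \ref{lemma:1} for everything that does not involve the extra hypothesis (\ref{T3-2}): there $(A,\cdot,1)$ and $(A,+,0)$ are already monoids, $\bar a=1+a=a+1$, and the de Morgan laws (\ref{L6}) hold. The role of (\ref{T3-2}) is to upgrade this near-ring-like data into a genuine Boolean ring by supplying idempotency, an annihilation law, characteristic two, and finally the second distributive law.

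First I would record the consequences of (\ref{T3-2}) that come essentially for free. Idempotency of $\cdot$ is immediate: setting $b=a$ gives $a\cdot a=p(0,a,a)=p(a,a,a)=a$ by \ref{C2}. For the annihilation law I would combine (\ref{T3-2}) with (\ref{L3}), namely $a\cdot\bar a=p(0,a,\bar a)=p(\bar a,\bar a,0)=p(0,\bar a,0)=0$, using (\ref{L3}) for the first rewriting, (\ref{T3-2}) with $a$ replaced by $\bar a$ and $b=0$ for the second, and \ref{C2} at the end. Characteristic two then drops out, since $a+a=p(a,a,\bar a)=p(0,a,\bar a)=a\cdot\bar a=0$ by (\ref{T3-2}) once more. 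At this point Lemma \ref{lemma:a+a} applies verbatim: $+$ is commutative, every element is its own inverse so $(A,+,0)$ is an abelian group, and $\cdot$ is right distributive over $+$.

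The only remaining ingredient, and the crux of the proof, is the second (left) distributive law $a\cdot(b+c)=a\cdot b+a\cdot c$. I expect this to be the main obstacle, for a structural reason already flagged in the remark after Lemma \ref{lemma:a+a}: axiom \ref{C3} distributes $p(-,\cdot,-)$ only across the middle argument, and the right-distributivity computation in Lemma \ref{lemma:a+a} succeeds precisely because the common factor $c$ occupies the third slot of $p(0,\cdot,c)$, matching the shared-outer-slots pattern of \ref{C3} with the summands $a,\bar a$ in the varying middle. In $a\cdot b=p(0,a,b)$ the common left factor $a$ instead sits in the middle slot, so \ref{C3} cannot be invoked directly and left distributivity is genuinely not forced by \ref{C1}--\ref{C3} alone. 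This is exactly where (\ref{T3-2}) must be used essentially: it lets one replace $p(0,a,x)$ by $p(a,a,x)$, moving the left factor out of the unique middle slot so that a nested occurrence arising from $b+c$ can be exposed to \ref{C3}.

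Accordingly I would establish left distributivity by a ternary computation parallel to the right-distributivity argument in Lemma \ref{lemma:a+a}, now inserting (\ref{T3-2}) at the step where that argument relied on the favourable position of the common factor: expand $b+c=p(b,c,\bar b)$, use (\ref{T3-2}) to reposition $a$, apply \ref{C3} to the resulting middle argument, and simplify with \ref{C4}, (\ref{L3}) and $a+a=0$, keeping the annihilation law $a\cdot\bar a=0$ at hand to collapse the boundary terms, until both sides reach a common normal form. Once both distributive laws hold together with idempotency, commutativity of $\cdot$ is automatic by the classical argument, since $a+b=(a+b)^2=a+ab+ba+b$ forces $ab+ba=0$, that is $ab=ba$ in characteristic two. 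Hence $(A,+,\cdot,0,1)$ is a commutative unital ring in which every element is idempotent, i.e. a Boolean ring.
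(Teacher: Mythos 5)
Your opening moves (idempotency of $\cdot$, the annihilation $a\cdot\bar a=0$, characteristic two, and the appeal to Lemma \ref{lemma:a+a} for commutativity of $+$ and right distributivity) are correct and coincide with the paper's. The gap is exactly where you yourself place the crux: the left-distributivity step is not a proof but a hope, and the mechanism you describe does not work. After rewriting $a\cdot(b+c)=p(0,a,p(b,c,\bar b))$ as $p(a,a,p(b,c,\bar b))$ via (\ref{T3-2}), the nested sum $p(b,c,\bar b)$ sits in the \emph{third} slot of the outer $p$, and axiom \ref{C3} distributes only over the \emph{middle} slot; applying (\ref{L3}) merely moves it to the first slot. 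So ``apply \ref{C3} to the resulting middle argument'' has nothing to act on. The only productive move available is the reverse one: rewrite the outer $0$ as $p(b,b,\bar b)$ (using $b+b=0$) and collapse with \ref{C3} read right-to-left, which yields $a\cdot(b+c)=b+p(b,a,c)$, i.e. the identity (\ref{magic}) of the paper. But this leaves the term $p(b,a,c)$, which at this stage cannot be evaluated in terms of the binary operations: the needed formula $p(b,a,c)=(\bar a\cdot b)\circ(a\cdot c)$ is condition \ref{T2} of Theorem \ref{thm:1}, proved only after the Boolean structure is available, so invoking it here would be circular.

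The paper sidesteps this by never proving full left distributivity directly. It establishes only the two special cases (\ref{LD}), namely $a\cdot(a+b)=a+(a\cdot b)$ and $b\cdot(a+b)=(b\cdot a)+b$, which do succumb to the collapse trick precisely because the sum $a+b=p(a,b,\bar a)$ has outer entries $a,\bar a$ matching the rewrite $0=p(a,a,\bar a)$: axiom \ref{C3} then collapses $p(0,a,p(a,b,\bar a))=p(p(a,a,\bar a),a,p(a,b,\bar a))=p(a,p(a,a,b),\bar a)$, and (\ref{T3-2}) turns the middle entry into $a\cdot b$. These special cases are exactly what your final computation needs: expanding $(a+b)\cdot(a+b)=a+b$ with right distributivity and (\ref{LD}) gives $a+(a\cdot b)+(b\cdot a)+b=a+b$, hence $(a\cdot b)+(b\cdot a)=0$ and $a\cdot b=b\cdot a$; only then does full left distributivity follow for free from right distributivity plus commutativity. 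To repair your proof, replace the general left-distributivity claim by these two special identities, proved by the collapse computation above, and run your $(a+b)^2$ argument with them; as written, your order of attack --- full left distributivity first, commutativity second --- rests on a step that fails.
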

\begin{proof}
Condition (\ref{T3-2}) implies Boolean complements and $a+a=0$:
\begin{eqnarray}
a\cdot\bar a&=&p(0,a,\bar a)\ \eq{(\ref{L3})}\ p(\bar a,\bar a,0)\ \eq{(\ref{T3-2})}\ p(0,\bar a,0)\ \eq{\ref{C2}}\ 0,\nonumber\\
\label{zero-plus}a+a&=&p(a,a,\bar a)\ \eq{(\ref{T3-2})}\ p(0,a,\bar a)=a\cdot \bar a=0.
\end{eqnarray}
Now, this result and Lemma \ref{lemma:a+a} imply that $+$ is commutative
\begin{equation}
\label{commut-plus}p(a,b,\bar a)=p(b,a,\bar b)
\end{equation}
and $\cdot$ distribute on the right over $+$
\begin{equation}
(a\cdot c)+(b\cdot c)=(a+b)\cdot c.\label{RD}
\end{equation} 
In addition, the following properties hold:
\begin{equation}
\label{LD}a\cdot(a+b)=a+(a\cdot b),\quad b\cdot(a+b)=(b\cdot a)+b.
\end{equation}
Indeed:
\begin{eqnarray*}
a\cdot(a+b)&=&p(0,a,p(a,b,\bar a)\ \eq{(\ref{zero-plus})}\ p(p(a,a,\bar a),a,p(a,b,\bar a))\\
            &\eq{\ref{C3}}&p(a,p(a,a,b),\bar a)\ \eq{(\ref{T3-2})}\ p(a,a\cdot b,\bar a)=a+(a\cdot b).
\end{eqnarray*}
The second relation in (\ref{LD}) is a consequence of the commutativity of~$+$.
We can now prove that $\cdot$ is commutative. From Lemma~\ref{lemma:complement} we already know that, under the hypothesis of Proposition~\ref{theorem:ring}, $\cdot$ is idempotent and consequently 
\begin{eqnarray*}
(a+b)\cdot (a+b)=a+b &\impli{(\ref{RD})}&\big(a\cdot(a+b)\big)+\big(b\cdot(a+b)\big)=a+b\\
                      &\impli{(\ref{LD})}&\big(a+(a\cdot b)\big)+\big((b\cdot a)+b\big)=a+b\\
                      &\impli{(\ref{monoid-plus}),(\ref{commut-plus})}&(a\cdot b)+(b\cdot a)+a+b=a+b\\
                      &\impli{(\ref{monoid-plus}),(\ref{zero-plus})}&(a\cdot b)+(b\cdot a)=0\\
											&\impli{(\ref{monoid-plus}),(\ref{zero-plus})}& a\cdot b=b\cdot a.
\end{eqnarray*}
\end{proof}

The following theorem is a refinement of Grau's  ternary Boolean algebra in the sense that it uses Church's  operation and a systematisation of Hoare's axioms considered in \cite{hoare}.

\begin{theorem}\label{thm:1}
Suppose that $(A,p,0,1)$ satisfies axioms \ref{C1} to~\ref{C3}. For 
$$\bar{a}=p(1,a,0),\ a\cdot b=p(0,a,b),\ a\circ b=p(a,b,1)\ \,\textrm{and}\ \,a+b=p(a,b,\bar a),$$
the following conditions are equivalent:
\begin{enumerate}[label={\rm(\roman*)}]
\item\label{T5} $(A,+,\cdot,0,1)$ is a Boolean ring
\item\label{T1} $(A,\circ,\cdot,\bar{()},0,1)$ is a Boolean algebra
\item\label{T2} $p(a,b,c)=(\bar{b}\cdot a)\circ (b\cdot c)$
\item\label{T3} $p(a,a,b)=a\cdot b$
\item\label{T4} $p(a,b,b)=a\circ b$.
\end{enumerate}
\end{theorem}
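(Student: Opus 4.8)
The plan is to prove the five conditions equivalent through the cycle $\ref{T1}\Rightarrow\ref{T2}\Rightarrow\ref{T3}\Rightarrow\ref{T5}\Rightarrow\ref{T1}$, and then dispose of \ref{T4} separately by duality. Three of these implications are short. The step $\ref{T3}\Rightarrow\ref{T5}$ is essentially free: since $p(0,a,b)=a\cdot b$ by \eqref{def:cdot}, condition \ref{T3} is literally the hypothesis \eqref{T3-2} of Proposition \ref{theorem:ring}, so $(A,+,\cdot,0,1)$ is a Boolean ring. For $\ref{T5}\Rightarrow\ref{T1}$ I would invoke the classical correspondence between Boolean rings and Boolean algebras: by \eqref{L6}, \eqref{L11}, and \eqref{L2} one gets $a\circ b=\overline{\bar b\cdot\bar a}=a+b+a\cdot b$ and $\bar a=1+a$, so the join, meet, and complement of the candidate Boolean algebra are exactly those induced by the ring $(A,+,\cdot,0,1)$; the lattice axioms then follow from the ring axioms, with idempotency of $\cdot$ supplied by Lemma \ref{lemma:complement} and distributivity by Lemma \ref{lemma:2}.

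For $\ref{T2}\Rightarrow\ref{T3}$ I would use a well-chosen evaluation. Assuming \ref{T2}, substituting $(a,b,c)=(1,a,1)$ and using that $1$ is the unit of $(A,\cdot,1)$ (by \eqref{L7}) gives $p(1,a,1)=(\bar a\cdot 1)\circ(a\cdot 1)=\bar a\circ a$, whereas $\ref{C2}$ forces $p(1,a,1)=1$. Hence $\bar a\circ a=1$, and then \eqref{L6}, \eqref{L2}, \eqref{L1} yield $\bar a\cdot a=\overline{\bar a\circ a}=\overline{1}=0$. Thus both relations \eqref{complement} hold, Lemma \ref{lemma:complement} supplies idempotency, and evaluating \ref{T2} at $b=a$ gives $p(a,a,c)=(\bar a\cdot a)\circ(a\cdot c)=0\circ(a\cdot c)=a\cdot c$, which is \ref{T3}. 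The equivalence $\ref{T3}\Leftrightarrow\ref{T4}$ is pure duality: complementing $p(a,a,b)=a\cdot b$ via \eqref{L5} and \eqref{L6} turns it into $p(\bar b,\bar a,\bar a)=\bar b\circ\bar a$, and since complementation is an involution \eqref{L2} this is exactly condition \ref{T4}.

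The main obstacle is the remaining implication $\ref{T1}\Rightarrow\ref{T2}$, which asks us to reconstruct the primitive operation $p$ from the derived lattice operations. Here I would exploit that, in the Boolean algebra \ref{T1}, every element satisfies $x=(x\cdot b)\circ(x\cdot\bar b)$, so it suffices to compute the two slices $p(a,b,c)\cdot b$ and $p(a,b,c)\cdot\bar b$. Pushing the meet inside via $\ref{C3}$ gives $p(a,b,c)\cdot b=p(0,p(a,b,c),b)=p(a\cdot b,b,c\cdot b)$, and an analogous expansion handles $\bar b$. I would then pin these down by an order-theoretic squeeze, using the complement laws \eqref{complement}, the auxiliary identity $p(x,b,0)=\bar b\cdot x$ (from \eqref{def:wedge} and \eqref{L2}, noting that $\wedge$ and $\cdot$ coincide), and $\overline{p(a,b,c)}=p(\bar a,b,\bar c)$ from \eqref{L4}: one checks that $p(a,b,c)\cdot b$ lies below both $b$ and $c$ and above $b\cdot c$, forcing $p(a,b,c)\cdot b=b\cdot c$, and the dual computation gives $p(a,b,c)\cdot\bar b=\bar b\cdot a$. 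Reassembling, $p(a,b,c)=\bigl(p(a,b,c)\cdot b\bigr)\circ\bigl(p(a,b,c)\cdot\bar b\bigr)=(\bar b\cdot a)\circ(b\cdot c)$, which is \ref{T2}.

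The delicate point throughout this last step is that $\ref{C3}$ must be combined with the complement relations at exactly the right places: without the Boolean laws, every substitution of $0$ or $1$ into a decomposition of the middle argument collapses $\ref{C3}$ to a tautology, so the content of \ref{T2} cannot be extracted by manipulating \ref{C1}--\ref{C3} alone. It is precisely the order-theoretic squeeze, feeding in $b\cdot\bar b=0$ and $b\circ\bar b=1$, that makes the Church-style formula genuinely reflect the Boolean structure, and this is where I expect the real work of the proof to lie.
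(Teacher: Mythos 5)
Your proposal is correct, and four of its five implications coincide with the paper's own proof: your \ref{T2}$\Rightarrow$\ref{T3} (evaluating at $p(1,a,1)$ and dualizing), \ref{T3}$\Leftrightarrow$\ref{T4} (complementation via \eqref{L5}, \eqref{L6}), \ref{T3}$\Rightarrow$\ref{T5} (Proposition \ref{theorem:ring}), and \ref{T5}$\Rightarrow$\ref{T1} (showing $\bar a=1+a$ and $a\circ b=a+b+a\cdot b$, then invoking the classical Boolean ring/Boolean algebra correspondence) are the paper's steps almost verbatim; your derivation $a\circ b=\overline{\bar b\cdot\bar a}=1+(1+b)\cdot(1+a)=a+b+a\cdot b$ from \eqref{L6}, \eqref{L2}, \eqref{L11} is in fact slightly slicker than the paper's ternary computation $a+b+a\cdot b=p(a,p(0,b,\bar a),\bar a)=p(a,b,1)$, though your appeal to Lemma \ref{lemma:complement} for idempotency there is redundant, since idempotency of $\cdot$ is part of the definition of a Boolean ring. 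The genuine difference is in \ref{T1}$\Rightarrow$\ref{T2}. The paper slices $p(a,b,c)$ along its \emph{third} argument, using the distributive-lattice cancellation principle (if $x\cdot c=x'\cdot c$ and $c\circ x=c\circ x'$ then $x=x'$) together with the identities $p(a,b,c)\cdot c=(a\circ b)\cdot c$ and $c\circ p(a,b,c)=c\circ(\bar b\cdot a)$; this argument needs no complements, which is exactly why the paper can recycle it word for word for de Morgan algebras in Theorem \ref{thm:DM}. You instead slice along the \emph{middle} argument via the Boole--Shannon expansion $x=(x\cdot b)\circ(x\cdot\bar b)$ and identify each slice by an order squeeze; this is intrinsically Boolean, but it reads Church's conditioned disjunction in the natural way, with $b$ as the condition. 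Your sketch does complete with the toolkit you name: the two nontrivial inequalities follow, e.g., from $p(a,b,c)\cdot\bar c=p(p(0,a,\bar c),b,p(0,c,\bar c))=p(a\cdot\bar c,b,0)=\bar b\cdot a\cdot\bar c$ (by \ref{C3}, the complement law $c\cdot\bar c=0$, and your auxiliary identity $p(x,b,0)=\bar b\cdot x$), which gives $p(a,b,c)\cdot b\cdot\bar c=0$, i.e., $p(a,b,c)\cdot b\le c$; the lower bound $p(a,b,c)\cdot b\ge b\cdot c$ and the $\bar b$-slice are handled symmetrically. In short, the paper's route buys reusability beyond the Boolean setting, while yours buys a self-contained and conceptually transparent identification of the two slices of $p$.
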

\begin{proof}
The proof proceeds as follows: \ref{T1} $\Rightarrow$ \ref{T2} $\Rightarrow$ (\ref{T3} $\Leftrightarrow$ \ref{T4}) $\Rightarrow$ \ref{T5} $\Rightarrow$ \ref{T1}.
We begin by proving that if $(A,p,0,1)$ is a system verifying the hypothesis of Theorem \ref{thm:1} then \ref{T1} implies \ref{T2}. It is well known (see e.g. \cite{Birk-livro, NMF2012}) that, in a distributive lattice, if $x\cdot a=x'\cdot a$ and $a\circ x=a\circ x'$ for some given element $a$ in the lattice then $x=x'$. We show here that if $(A,p,0,1)$ verifies \ref{C1} to~\ref{C3} and $(A,\circ,\cdot,\bar{()},0,1)$ is a Boolean algebra then
$$\left\{\begin{array}{rcl}
p(a,b,c)\cdot c&=&((\bar{b}\cdot a)\circ(b\cdot c))\cdot c\\
c\circ p(a,b,c)&=&c\circ ((\bar{b}\cdot a)\circ(b\cdot c))
\end{array}\right.,$$
which proves $\ref{T2}$. Indeed:
$$\begin{array}{rcccl}
p(a,b,c)\cdot c&=&p(0,p(a,b,c),c)&\eq{\ref{smA7}}&p(p(0,a,c),b,p(0,c,c))\\[5pt]
                &\eq{(\ref{idem})}&p(p(0,a,c),b,c)&\eq{\ref{C4}}&p(p(0,a,c),b,p(0,1,c))\\[5pt]
								&\eq{\ref{smA7}}&p(0,p(a,b,1),c)&=&(a\circ b)\cdot c\\[5pt]
								&=&((\overline{b}\cdot a)\circ b)\cdot c
								&=&((\overline{b}\cdot a)\circ(b\cdot c))\cdot c
\end{array}$$
$$\begin{array}{rcccl}
c\circ p(a,b,c)&=&p(c,p(a,b,c),1)&\eq{\ref{smA7}}&p(p(c,a,1)),b,p(c,c,1))\\[5pt]
							&\eq{(\ref{idem})}&p(p(c,a,1),b,c)&\eq{\ref{C4}}&p(p(c,a,1),b,p(c,0,1))\\[5pt]
							&\eq{\ref{smA7}}&p(c,p(a,b,0),1)&\eq{(\ref{L3})}&p(c,p(0,\overline{b},a),1)\\[5pt]
							&=&c\circ (\overline{b}\cdot a)
							&=&c\circ((\overline{b}\cdot a)\circ (b\cdot c)).
\end{array}$$
Next, it is shown that condition $\ref{T2}$ implies condition $\ref{T3}$. Indeed, when $\ref{T2}$ is true, we have:
$$p(1,a,1)=(\bar{a}\cdot1)\circ(a\cdot 1)$$
which means, using \ref{C2} and (\ref{L7}), that $1=\bar{a}\circ a$ and, by duality, that $\bar a\cdot a=0$. Therefore $p(a,a,b)=(\bar a\cdot a)\circ(a\cdot b)=a\cdot b$. Conditions $\ref{T3}$ and $\ref{T4}$ are equivalent by duality (\ref{L6}). Proposition \ref{theorem:ring} proves that $\ref{T3}$ implies $\ref{T5}$. It remains to prove \ref{T5} $\Rightarrow$ \ref{T1}, that is, if $(A,+,\cdot,0,1)$ is a Boolean ring then $(A,\circ,\cdot,\overline{()},0,1)$ is a Boolean algebra with $\bar a$ defined as $a+1$ and $a\circ b$ defined as $a+b+a\cdot b$. Indeed, firstly we have:
\begin{equation}\label{a-plus-1}
a+1=p(a,1,\bar a)\ \eq{\ref{C4}}\ \bar a
\end{equation}
and consequently
\begin{equation}\label{a-plus-bar}
a+(a+1)=a+\bar a\quad\ \impli{(\ref{monoid-plus}),(\ref{zero-plus})}\quad\ a+\bar a=1.
\end{equation}
Secondly, we have
\begin{equation}\label{a-plus}
a+b+a\cdot b\ \eq{(\ref{ring})}\ a+(b\cdot(a+1))\ \eq{(\ref{a-plus-1})}\ a+(b\cdot\bar a)
\end{equation}
\begin{eqnarray*}
\Rightarrow a+b+a\cdot b&\eq{(\ref{a-plus})}& p(a,p(0,b,\bar a),\bar a)\quad\ \eq{\ref{C3},\ref{C4}}\quad\ p(a,b,p(a,\bar a,\bar a))\\
             &=& p(a,b,a+\bar a)\ \eq{(\ref{a-plus-bar})}\ p(a,b,1)=a\circ b.
\end{eqnarray*}
\end{proof}
It is straightforward to prove that the category of Boolean algebras is isomorphic to the category of structures $(X,p,0,1)$ of type (3,0,0) verifying \ref{C1}--\ref{C3} together with $p(a,b,c)=p(p(0,\bar b, a),p(0,b,c),1)$ or any other equivalent formulation presented in Theorem \ref{thm:1}. A detailed proof of this result is given in the next section for the more general case of de Morgan algebras.

\section{De Morgan algebra}\label{section:DM}   
A de Morgan algebra is a structure $(A,\cdot,\circ,\bar{()},0,1)$ consisting of a bounded distributive lattice $(A,\cdot,\circ,0,1)$ together with an involution~$\overline{()}$ verifying $\overline{a\cdot b}=\bar b\circ \bar a$ (or $\overline{a\circ b}=\bar b\cdot \bar a$).
Simple examples of de Morgan algebras are the sets of divisors of any given positive integer $n$ with $gdc$ as $\cdot$, $lmc$ as $\circ$ and $\bar a=n/a$. Multiple valued-logic \cite{Belnap, Kalman, Font} are also examples of de Morgan algebras. In this section, a characterization  of de Morgan algebras in terms of a ternary structure is given.
\begin{theorem}\label{thm:DM}
Let $(A,p,0,1)$ be a system consisting of a set $A$, a ternary operation $p$ and two constants $0,1\in A$ satisfying the conditions \ref{C1} to \ref{C3}.
For $a\cdot b=p(0,a,b)$, $a\circ b=p(a,b,1)$ and $\bar{a}=p(1,a,0)$, the following conditions are equivalent:
\begin{enumerate}[label={\rm(\roman*)}]
\item\label{DMT1} The system $(A,\circ,\cdot,\bar{()},0,1)$ is a de Morgan algebra
\item\label{DMT-dist} The system $(A,\circ,\cdot)$ is a distributive lattice
\item\label{DMT-0} $(A,\circ)$ is a join-semilattice
\item\label{DMT-1} $(A,\cdot)$ is a meet-semilattice
\item\label{DMT-ou} $(A,\circ)$ is an idempotent and commutative magma
\item\label{DMT3} $(A,\cdot)$ is an idempotent and commutative magma
\item\label{DMT2} $p(a,b,c)=(\bar{b}\cdot a)\circ (a\cdot c)\circ (b\cdot c)$
\item\label{DMT2-2} $p(a,b,c)=(\bar{b}\circ c)\cdot (b\circ a)\cdot (a\circ c)$
\end{enumerate}
\end{theorem}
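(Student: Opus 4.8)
The plan is to treat the six ``structural'' conditions \ref{DMT1}, \ref{DMT-dist}, \ref{DMT-0}, \ref{DMT-1}, \ref{DMT-ou}, \ref{DMT3} as one equivalence class and then attach the two closed formulas \ref{DMT2} and \ref{DMT2-2} to it. The key preliminary observation is that Lemma \ref{lemma:1} already supplies, for free, associativity and unit laws for both $\cdot$ and $\circ$ (\ref{L7}), the involution $\overline{\overline a}=a$ (\ref{L2}), and the de Morgan laws (\ref{L6}). Consequently an idempotent commutative magma structure on $\circ$ is exactly a join-semilattice, so $\ref{DMT-ou}\Leftrightarrow\ref{DMT-0}$, and dually $\ref{DMT3}\Leftrightarrow\ref{DMT-1}$; moreover (\ref{L6}) transports any statement about $\circ$ to the corresponding statement about $\cdot$, giving $\ref{DMT-0}\Leftrightarrow\ref{DMT-1}$. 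Thus these four conditions collapse to the single requirement that $\circ$ (equivalently $\cdot$) be commutative and idempotent. From there Lemma \ref{lemma:idem} yields the absorption laws (\ref{abs}) and Lemma \ref{lemma:2} yields distributivity, so this single requirement already produces a distributive lattice \ref{DMT-dist}; since the bounds, the involution and the de Morgan laws are present, \ref{DMT-dist} upgrades to a de Morgan algebra \ref{DMT1}, while the reverse chain $\ref{DMT1}\Rightarrow\ref{DMT-dist}\Rightarrow\ref{DMT-ou}$ is immediate restriction.

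Next I would link the closed formula \ref{DMT2} to this class by a cycle $(\text{structure})\Rightarrow\ref{DMT2}\Rightarrow\ref{DMT-ou}$. For the converse half $\ref{DMT2}\Rightarrow\ref{DMT-ou}$: substituting $b=0$ into \ref{DMT2} and simplifying with (\ref{L7}) and axiom \ref{C4} gives the absorption identity $a\circ(a\cdot c)=a$, and specialising $c=1$ gives idempotency $a\circ a=a$; Lemma \ref{lemma:idem} then delivers $a\circ(b\cdot a)=a$. Commutativity of $\circ$ is the pretty step: rewriting $a\circ b=p(a,b,1)$, which by (\ref{L3}) equals $p(1,\bar b,a)$, and then expanding $p(1,\bar b,a)$ by \ref{DMT2} produces $b\circ a\circ(\bar b\cdot a)$, which collapses to $b\circ a$ by the absorption just obtained; comparing with $p(a,b,1)=a\circ b$ yields $a\circ b=b\circ a$. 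This establishes \ref{DMT-ou} and closes the cycle.

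For the forward half $(\text{structure})\Rightarrow\ref{DMT2}$ I would imitate the cancellation argument of Theorem \ref{thm:1}. Working inside the distributive lattice, one computes the two projections of $p(a,b,c)$ onto $c$: by \ref{C3}, idempotency and \ref{C4} one gets $p(a,b,c)\cdot c=(a\circ b)\cdot c$ and, using (\ref{L3}), $c\circ p(a,b,c)=c\circ(\bar b\cdot a)$. Expanding the candidate $m=(\bar b\cdot a)\circ(a\cdot c)\circ(b\cdot c)$ with distributivity and the absorption laws gives the same two values $m\cdot c=(a\circ b)\cdot c$ and $c\circ m=c\circ(\bar b\cdot a)$. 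Since a distributive lattice satisfies the cancellation law ``$x\cdot c=y\cdot c$ and $c\circ x=c\circ y$ imply $x=y$'', invoked already in Theorem \ref{thm:1}, this forces $p(a,b,c)=m$, i.e.\ \ref{DMT2}. Finally \ref{DMT2-2} is the exact dual of \ref{DMT2} under the involution: applying $\overline{(\,\cdot\,)}$ to \ref{DMT2} and using (\ref{L5}) together with the de Morgan laws (\ref{L6}) and the commutativity now available turns the join-of-meets expression into the meet-of-joins expression, so $\ref{DMT2}\Leftrightarrow\ref{DMT2-2}$.

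I expect the main obstacle to be the forward verification $(\text{structure})\Rightarrow\ref{DMT2}$: unlike the Boolean case there is no complementation $b\circ\bar b=1$ to split on, so the identity cannot be proved by a case analysis on $b$, and one must instead rely on the two-sided projection together with the distributive cancellation law. Getting the absorption-based simplifications of $m\cdot c$ and $c\circ m$ to land exactly on the projections of $p$ is where the middle term $a\cdot c$ in the formula does its work, and where the computation must be carried out with care.
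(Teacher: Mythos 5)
Your proposal is, in substance, the paper's own proof. The collapse of the six structural conditions uses exactly Lemmas \ref{lemma:1}, \ref{lemma:idem} and \ref{lemma:2}, as the paper does for \ref{DMT3}$\Rightarrow$\ref{DMT1}; your passage from the lattice structure to \ref{DMT2} is the paper's two-projection cancellation argument (matching $p(a,b,c)\cdot c=(a\circ b)\cdot c$ and $c\circ p(a,b,c)=c\circ(\bar b\cdot a)$ against the candidate and cancelling in the distributive lattice, exactly as in Theorem \ref{thm:1}); and your derivation of commutativity from \ref{DMT2}, namely $a\circ b=p(a,b,1)=p(1,\bar b,a)=b\circ a\circ(\bar b\cdot a)=b\circ a$, is literally the computation in the paper. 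The only (harmless) variation is that you obtain the absorption $a\circ(b\cdot a)=a$ from idempotency via Lemma \ref{lemma:idem}, where the paper re-derives it from \ref{DMT2} applied to $p(a,b,a)$.

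There is, however, one genuine gap: the treatment of \ref{DMT2-2}. Applying the involution to \ref{DMT2} with (\ref{L5}) and (\ref{L6}) does prove \ref{DMT2}$\Rightarrow$\ref{DMT2-2}, because under \ref{DMT2} the whole de Morgan structure, in particular commutativity, is already in force. But the converse cannot be obtained by reading the same argument backwards: starting from \ref{DMT2-2} alone, the involution yields $p(u,v,w)=(u\cdot w)\circ(w\cdot v)\circ(u\cdot\bar v)$, which coincides with the right-hand side of \ref{DMT2} only after commuting factors --- and commutativity is precisely what is not yet available under the hypothesis \ref{DMT2-2}. As written, condition \ref{DMT2-2} is therefore never shown to imply the other seven, so the eightfold equivalence is not closed. (The paper is equally terse here, declaring the remaining equivalences ``trivially verified'', but it does not rest them on a circular step.) The repair is short: from \ref{DMT2-2} one gets commutativity of $\circ$ directly, since $a\circ b=p(a,b,1)=(\bar b\circ 1)\cdot(b\circ a)\cdot(a\circ 1)=b\circ a$ by (\ref{L10}) and (\ref{L7}); moreover \ref{C4} gives $c=p(a,1,c)=(\bar 1\circ c)\cdot(1\circ a)\cdot(a\circ c)=c\cdot(a\circ c)$ by (\ref{L1}), (\ref{L10}) and (\ref{L7}), which at $a=0$ yields $c\cdot c=c$; transporting these through (\ref{L2}) and (\ref{L6}) gives \ref{DMT3} and closes the cycle.
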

\begin{proof}
We begin by proving that if $(A,p,0,1)$ is a system verifying the conditions \ref{C1}--\ref{C3} then $\ref{DMT3}$ implies $\ref{DMT1}$. Indeed, $(A,\cdot,0)$ and $(A,\circ,1)$ are monoids as demonstrated in Lemma \ref{lemma:1}. Commutativity and idempotency of $\cdot$ are precisely what $\ref{DMT3}$ states. Commutativity and idempotency of $\circ$ follows by duality. Distributivity of $\cdot$ and $\circ$ over each other is guaranteed by Lemma \ref{lemma:2}. As shown in Lemma \ref{lemma:1}, the unary operation $\bar{()}$ is an involution and verifies the de~Morgan laws~(\ref{L6}), which concludes the proof $\ref{DMT3}\Rightarrow\ref{DMT1}$.

We will now prove that the ternary operation $\ref{DMT2}$ is the only one compatible with $\ref{DMT1}$.
It is well known (see e.g. \cite{Birk-livro, NMF2012}) that, in a distributive lattice, if $x\cdot a=x'\cdot a$ and $a\circ x=a\circ x'$ for some given element $a$ in the lattice then $x=x'$. We show here that if $(A,p,0,1)$ verifies \ref{C1}--\ref{C3} and $(A,\circ,\cdot,\bar{()},0,1)$ is a de Morgan algebra then
$$\left\{\begin{array}{rcl}
p(a,b,c)\cdot c&=&((\bar{b}\cdot a)\circ(a\cdot c)\circ(b\cdot c))\cdot c\\
c\circ p(a,b,c)&=&c\circ ((\bar{b}\cdot a)\circ(a\cdot c)\circ(b\cdot c))
\end{array}\right.,$$
which proves $\ref{DMT1}\to\ref{DMT2}$. Indeed:
$$\begin{array}{rcccl}
p(a,b,c)\cdot c&=&p(0,p(a,b,c),c)&\eq{\ref{smA7}}&p(p(0,a,c),b,p(0,c,c))\\[5pt]
                &\eq{(\ref{idem})}&p(p(0,a,c),b,c)&\eq{\ref{C4}}&p(p(0,a,c),b,p(0,1,c))\\[5pt]
								&\eq{\ref{smA7}}&p(0,p(a,b,1),c)&=&(a\circ b)\cdot c;
\end{array}$$
$$((\bar{b}\cdot a)\circ(a\cdot c)\circ(b\cdot c))\cdot c= ((\bar{b}\cdot a)\circ a\circ b)\cdot c=(a\circ b)\cdot c;$$
$$\begin{array}{rcccl}
c\circ p(a,b,c)&=&p(c,p(a,b,c),1)&\eq{\ref{smA7}}&p(p(c,a,1)),b,p(c,c,1))\\[5pt]
							&\eq{(\ref{idem})}&p(p(c,a,1),b,c)&\eq{\ref{C4}}&p(p(c,a,1),b,p(c,0,1))\\[5pt]
							&\eq{\ref{smA7}}&p(c,p(a,b,0),1)&\eq{(\ref{L3})}&p(c,p(0,\overline{b},a),1)\\[5pt]
							&=&\hspace{-20pt}c\circ (\overline{b}\cdot a);&&
\end{array}$$
$$c\circ(\overline{b}\cdot a)\circ(a\cdot c)\circ (b\cdot c)=c\circ ((a\circ b)\cdot c)\circ(\overline{b}\cdot a)=c\circ (\overline{b}\cdot a).$$

We now turn to the proof that condition $\ref{DMT2}$ implies condition $\ref{DMT3}$ in the context of Theorem \ref{thm:DM}. When $\ref{DMT2}$ is true, we have in particular that
\mbox{$p(b,1,a)=(\bar 1\cdot b)\circ(b\cdot a)\circ(1\cdot a)$}
which means, using \ref{C4}, (\ref{L1}), (\ref{L7}), (\ref{L8}) and (\ref{L9}), that 
\begin{equation}\label{abs1}
a=(b\cdot a)\circ a.
\end{equation} 
By duality, we also have the absorption rule 
\begin{equation}\label{abs3}
a\cdot(a\circ b)=a.
\end{equation}
Idempotency of $\cdot$ and $\circ$ follows as particular cases:
\begin{equation}\label{idempotency}
a\eq{(\ref{abs1})}(1\cdot a)\circ a\eq{(\ref{L7})}a\circ a,\qquad a\eq{(\ref{abs3})}a\cdot (a\circ 0)\eq{(\ref{L8})}a\cdot a.
\end{equation}
When $\ref{DMT2}$ is true, we also have \mbox{$p(a,b,a)=(\bar b\cdot a)\circ(a\cdot a)\circ(b\cdot a)$}
which means, using \ref{C2}, (\ref{idempotency}) and (\ref{abs1}), that 
\begin{equation}\label{abs2}
a=a\circ (b\cdot a).
\end{equation} 
Now, commutativity of $\circ$ can be proved as follows:
$$a\circ b=p(a,b,1)\eq{(\ref{L3})}p(1,\bar b,a)\ \ \eq{$\ref{T2}$,(\ref{L2})}\ \ (b\cdot 1)\circ (1\cdot a)\circ(\bar b\cdot a)\ \ \eq{(\ref{L7}),(\ref{abs2})}\ \ b\circ a.$$
The commutativity of $\cdot$ follows by duality. This proves $\ref{DMT2}\Rightarrow\ref{DMT3}$ and concludes the proof of Teorem \ref{thm:DM} because the other equivalences are trivially verified.
\end{proof}

We show now that the category of de Morgan algebras is isomorphic to the category of systems $(A,p,0,1)$ satisfying conditions \ref{C1}--\ref{C3} when the operations $\cdot$ or $\circ$ are commutative and idempotent. 
\begin{theorem}\label{thm:iso}
Let $(A,p,0,1)$ be a system consisting of a set $A$, together with a ternary operation $p$ and two constants $0,1\in A$ satisfying conditions 
\ref{C1} to \ref{C3} and
\begin{enumerate}[label={\bf (T\arabic*)}]
\setcounter{enumi}{4}
\item\label{C5-DM}$p(0,a,b)=p(0,b,a)$ and $p(0,a,a)=a$.
\end{enumerate}
The category of such systems is isomorphic to the category of de Morgan algebras.
\end{theorem}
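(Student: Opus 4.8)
The plan is to exhibit mutually inverse functors between the two categories that both act as the identity on underlying sets, so that the whole content of the statement is the mutual interdefinability of the ternary operation and the de Morgan operations. Write $\mathcal{C}$ for the category whose objects are the systems $(A,p,0,1)$ satisfying \ref{C1}--\ref{C3} and \ref{C5-DM}, with morphisms the maps preserving $p$, $0$ and $1$, and write $\mathcal{D}$ for the category of de Morgan algebras with their usual homomorphisms (preserving $\cdot$, $\circ$, $\overline{()}$, $0$ and $1$). I would define $F\colon\mathcal{C}\to\mathcal{D}$ by equipping an object of $\mathcal{C}$ with the operations $\cdot$, $\circ$ and $\overline{()}$ of Definition \ref{definition:1}, and $G\colon\mathcal{D}\to\mathcal{C}$ by equipping a de Morgan algebra with the ternary operation $p(a,b,c)=(\bar b\cdot a)\circ(a\cdot c)\circ(b\cdot c)$ of condition \ref{DMT2}. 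On arrows both functors are taken to be the identity on the underlying function.

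First I would check that $F$ and $G$ are well defined on objects. For $F$, condition \ref{C5-DM} is exactly the commutativity and idempotency of $\cdot$, i.e.\ condition \ref{DMT3}, so Theorem \ref{thm:DM} gives that $F(A,p,0,1)$ is a de Morgan algebra. For $G$ I must verify that the term-defined $p$ satisfies \ref{C1}--\ref{C3} and \ref{C5-DM} in every de Morgan algebra. Condition \ref{C5-DM} is immediate since $\cdot$ is the meet, and the boundary axioms \ref{C1}, \ref{C4}, \ref{C2} reduce to short absorption computations: for instance $p(0,a,1)=(\bar a\cdot0)\circ(0\cdot1)\circ(a\cdot1)=a$, then $p(a,0,b)=(1\cdot a)\circ(a\cdot b)\circ0=a$, and $p(a,b,a)=(\bar b\cdot a)\circ a\circ(b\cdot a)=a$, each using only boundedness, idempotency and absorption.

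The essential obstacle is the Menger-type identity \ref{C3} for the term-defined $p$, which is a five-variable lattice equation whose brute-force expansion via de Morgan's laws and distributivity is unwieldy. I would instead use the fact that the variety of de Morgan algebras is generated by a single four-element de Morgan algebra \cite{Kalman}; since \ref{C3} is an equational identity, it suffices to verify it on that one finite algebra, reducing the obstacle to a routine finite check. If a self-contained computation is preferred, one substitutes the de Morgan dual of the inner term $p(b_1,b_2,b_3)$ and collapses the resulting join of meets using distributivity and the absorption rules.

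Finally I would show that $F$ and $G$ are mutually inverse. On objects, $G\circ F=\mathrm{id}$ follows from Theorem \ref{thm:DM}: since \ref{DMT3} holds, the original $p$ coincides with the term \ref{DMT2} rebuilt from its derived operations. Conversely $F\circ G=\mathrm{id}$ follows from the three computations $p(0,a,b)=a\cdot b$, $p(a,b,1)=a\circ b$ and $p(1,a,0)=\bar a$, which recover the original de Morgan operations from the term-defined $p$. On arrows, a map preserving $p$, $0$ and $1$ automatically preserves the operations of Definition \ref{definition:1}, while a de Morgan homomorphism automatically preserves the term \ref{DMT2}; since both functors fix the underlying function, preservation of identities and composition, together with the inverse relations on morphisms, are then immediate.
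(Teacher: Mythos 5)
Your proposal is correct, and its overall architecture coincides with the paper's: two functors acting as the identity on underlying sets, well-definedness of the system-to-algebra direction via Theorem \ref{thm:DM} (noting that \ref{C5-DM} is exactly condition \ref{DMT3}), recovery of the original $p$ from the equivalence \ref{DMT3}$\Leftrightarrow$\ref{DMT2}, recovery of the original de Morgan operations from the three computations $p(0,a,b)=a\cdot b$, $p(a,b,1)=a\circ b$, $p(1,a,0)=\bar a$, and the observation that morphisms are trivially matched. The genuine difference is how you handle the one hard step, namely the Menger identity \ref{C3} for the term-defined $p$. The paper proves it by a direct computation valid in an arbitrary de Morgan algebra: it rewrites $p$ in the dual form $(\bar b\circ c)\cdot(b\circ a)\cdot(a\circ c)$, uses the analogue of (\ref{L4}) to express the complement of the inner $p(b_1,b_2,b_3)$, and collapses the resulting join of meets by distributivity and absorption until both sides equal $(\bar b_2\cdot p(a,b_1,c))\circ(b_2\cdot p(a,b_3,c))\circ(p(a,b_1,c)\cdot p(a,b_3,c))$. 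You instead invoke Kalman's theorem \cite{Kalman} that the variety of de Morgan algebras is generated by the four-element algebra, so the five-variable equation \ref{C3} need only be verified there --- a finite check of $4^5$ instances. Your route is shorter and conceptually cleaner, at the price of importing Birkhoff's subdirect-representation machinery and Kalman's classification of the subdirectly irreducible de Morgan algebras, and of leaving the finite verification unexhibited (it is mechanical, but you should also note that the generation result persists in the bounded signature, which holds because the two-, three- and four-element subdirectly irreducibles all embed in the four-element one with bounds preserved). The paper's route is longer but self-contained within the equational theory; indeed, your fallback sketch --- substitute the de Morgan dual of the inner term and collapse by distributivity and absorption --- is precisely the computation the paper carries out.
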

\begin{proof}
Consider a system $(A,p,0,1)$ verifying \ref{C1}--\ref{C5-DM} and define 
\begin{equation}\label{e-ou-bar}
a\cdot b=p(0,a,b),\ a\circ b=p(a,b,1)\ \textrm{and}\ \bar{a}=p(1,a,0).
\end{equation}
Then, by Theorem \ref{thm:DM}, $(A,\cdot,\circ,\bar{()},0,1)$ is a de Morgan algebra. Conversely, consider a de Morgan algebra $(A,\cdot,\circ,\bar{()},0,1)$ and define the ternary operation
\begin{equation}\label{p} 
p(a,b,c)=(\bar b\cdot a)\circ (a\cdot c)\circ (b\cdot c)
\end{equation}
then $p$ verifies axioms \ref{C1}--\ref{C5-DM}. We will last prove that \ref{C3} is verified. For the other axioms:
\begin{eqnarray*}
p(0,a,1)&=&(\bar{a}\cdot 0)\circ(a\cdot 1)\circ(0\cdot 1)=0\circ a\circ 0=a\\
p(a,b,a)&=&(\bar{b}\cdot a)\circ(b\cdot a)\circ (a\cdot a)\\
        &=&(\bar{b}\cdot a)\circ(b\cdot a)\circ a=(\bar{b}\cdot a)\circ a=a\\
p(a,0,b)&=&(1\cdot a)\circ(0\cdot b)\circ (a\cdot b)=a\circ (a\cdot b)=a\\
p(a,1,b)&=&(0\cdot a)\circ (1\cdot b)\circ (a\cdot b)=b\circ (a\cdot b)=b\\
p(0,a,b)&=&(\bar a\cdot 0)\circ (0\cdot b)\circ (a\cdot b)=(a\cdot b)=(b\cdot a)=p(0,b,a)\\
p(0,a,a)&=&(\bar a\cdot 0)\circ (0\cdot a)\circ (a\cdot a)=a.
\end{eqnarray*}
Before proving \ref{C3}, note that within a de Morgan algebra, (\ref{p}) can be written as
$$p(a,b,c)= (\bar{b}\circ c)\cdot (b\circ a)\cdot (a\circ c).$$
Using this result and also (\ref{L4}), we have:
\begin{eqnarray*}
p(a,p(b_1,b_2,b_3),c)&=& ([(\bar{b}_2\cdot\bar{b}_1)\circ(b_2\cdot\bar{b}_3)\circ(\bar{b}_1\cdot\bar{b}_3)]\cdot a)\\
                     &&\hspace{-10pt} \circ\,  ([(\bar{b}_2\cdot b_1)\circ(b_2\cdot b_3)\circ(b_1\cdot b_3)]\cdot c)\\
										 &&\hspace{-10pt} \circ\,  (a\cdot c)\\
										 &=& ((\bar{b}_2\cdot\bar{b}_1\cdot a)\circ(b_2\cdot\bar{b}_3\cdot a)\circ(\bar{b}_1\cdot\bar{b}_3\cdot a)\\
                     &&\hspace{-10pt} \circ\,  ((\bar{b}_2\cdot b_1\cdot c)\circ(b_2\cdot b_3\cdot c)\circ(b_1\cdot b_3\cdot c))\\
										 &&\hspace{-10pt} \circ\,  (\bar{b}_2\cdot a\cdot c)\circ (b_2\cdot a\cdot c)\circ (a\cdot c)\\
										 &=& (\bar{b}_2\cdot[(\bar{b}_1\cdot a)\circ(b_1\cdot c)\circ(a\cdot c)])\\
										 &&\hspace{-10pt} \circ\,  (b_2\cdot[(\bar{b}_3\cdot a)\circ (b_3\cdot c)\circ(a\cdot c)])\\
										 &&\hspace{-10pt} \circ\,  (\bar{b}_1\cdot\bar{b}_3\cdot a)\circ (b_1\cdot b_3\cdot c)\circ (a\cdot c)
\end{eqnarray*}
The last line is equal to
\begin{eqnarray*}
p(a,b_1,c)\cdot p(a,b_3,c)&=& ((\bar{b}_1\cdot a)\circ(b_1\cdot c)\circ(a\cdot c))\\
                       &&\hspace{-10pt} \cdot\, ((\bar{b}_3\cdot a)\circ(b_3\cdot c)\circ(a\cdot c))\\
											 &=& (\bar{b}_1\cdot\bar{b}_3\cdot a)\circ(\bar{b}_1\cdot b_3\cdot a\cdot c)\circ(\bar{b}_1\cdot a\cdot c)\\
											 &&\hspace{-10pt} \circ\, (b_1\cdot \bar{b}_3\cdot a\cdot c)\circ(b_1\cdot b_3\cdot c)\circ(b_1\cdot a \cdot c)\\
											 &&\hspace{-10pt} \circ\, (\bar{b}_3\cdot a\cdot c)\circ(b_3\cdot a \cdot c)\circ (a\cdot c)\\
											 &=& (\bar{b}_1\cdot\bar{b}_3\cdot a)\circ(b_1\cdot b_3 \cdot c)\circ(a\cdot c),
\end{eqnarray*}
and consequently, we have that
\begin{eqnarray*}
p(a,p(b_1,b_2,b_3),c)&=& (\bar{b}_2\cdot p(a,b_1,c))\circ(b_2\cdot p(a,b_3,c))\\
										 &&\hspace{-10pt} \circ\,  (p(a,b_1,c)\cdot p(a,b_3,c))\\
										 &=& p(p(a,b_1,c),b_2,p(a,b_3,c)).
\end{eqnarray*}
If starting with a de Morgan algebra $(A,\circ,\cdot,\overline{()},0,1)$ and constructing a system $(A,p,0,1)$ through (\ref{p}) then the de Morgan algebra $(A,\circ',\cdot',\overline{()}',0,1)$ obtained from it is exactly the same as the original one. Indeed:
\begin{eqnarray*}
a\circ' b=p(a,b,1)&=&(\bar b\cdot a) \circ(a\cdot 1)\circ(b\cdot 1)\\
                   &=&(\bar b\cdot a)\circ a\circ b=a\circ b\\
a\cdot' b=p(0,a,b)&=&(\bar a\cdot 0) \circ(0\cdot b)\circ(a\cdot b)=a\cdot b\\
\bar a'=p(1,a,0)&=&(\bar a\cdot 1)\circ (1\cdot 0)\circ(a\cdot 0)=\bar a.
\end{eqnarray*}
If starting with a system $(A,p,0,1)$ verifying \ref{C1}--\ref{C5-DM} and constructing a de Morgan algebra $(A,\circ,\cdot,\overline{()},0,1)$ through (\ref{e-ou-bar}) then the system $(A,p',0,1)$ obtained from it is exactly the same as the original one. Indeed:
\begin{eqnarray*}
p'(a,b,c)&=&(\bar b\cdot a)\circ(a\cdot c)\circ(b\cdot c)\\
         &=&p(p(p(0,\bar b,a),p(0,a,c),1),p(0,b,c),1)\\
									&=& p(a,b,c).
\end{eqnarray*}
Morphisms are defined as usual in a de Morgan algebra. On the ternary side, a morphism $f$ verifies the following requirements:
$$f(0)=0, f(1)=1, f(p(a,b,c))=p(f(a),f(b),f(c)).$$
All the morphisms are trivially preserved by the isomorphism.
\end{proof}
Expression \ref{DMT2} of Theorem \ref{thm:DM} implies that, in a de Morgan algebra, $p(a,a,b)=(\bar a\cdot a)\circ(a\cdot b)\quad$ and $\quad p(a,b,b)=(\bar b\circ b)\cdot(a\circ b)$. This means that in a general de Morgan algebra, the binary operation $p(a,a,b)$ is different from $a\cdot b$ and $p(a,b,b)$ is different from $a\circ b$. These operations are equal when the complement is Boolean in which case the de Morgan algebra is a Boolean algebra. It is worth noting too that, in a de Morgan algebra, the notion of a sum still remains from the ternary structure through $a+b=p(a,b,\bar a)$ and that $(A,+,0)$ is a monoid.
 
\section{Rings and near rings of characteristic two}\label{section:rings}  
Each new interpretation of the ternary operation $p$ satisfying axioms \mbox{\ref{C1}--\ref{C3}} in terms of its derived operations is equivalent to adding a new axiom and gives rise to a new subvariety, as Theorem \ref{thm:2} and Theorem \ref{thm:3} illustrate. An example of a ternary operation obtained from a unitary Abelian near-ring~\cite{near-rings-2021}, which is not necessarily determined by its derived operations, is presented in the next proposition. The algebraic model of the unit interval considered in \mbox{\cite{ccm_magmas, mobi}} is another example.
\begin{proposition}\label{prop}
If $(A,+,\cdot,0,1)$ is a unitary Abelian (right) near-ring, in which $a\cdot 0=0$, then $(A,p,0,1)$ with $p(a,b,c)=a+b(c-a)$ satisfies the axioms \ref{C1} to \ref{C3}.
\end{proposition}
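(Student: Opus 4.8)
The plan is to verify the four axioms \ref{C1}--\ref{C3} by direct substitution into $p(a,b,c)=a+b(c-a)$, relying only on the near-ring axioms together with the hypothesis $a\cdot 0=0$. Before doing so I would record two auxiliary identities valid in any right near-ring. First, right distributivity gives $0\cdot x=(0+0)x=0\cdot x+0\cdot x$, so $0\cdot x=0$ by cancellation in the additive group. Second, $0=0\cdot y=(x+(-x))y=xy+(-x)y$, whence $(-x)y=-(xy)$. These two facts, the assumed $a\cdot 0=0$, commutativity of $+$, associativity of $\cdot$, and right distributivity are all that the computation requires.

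The axioms \ref{C1}, \ref{C4} and \ref{C2} are immediate. For \ref{C1}, $p(0,a,1)=0+a(1-0)=a$. For \ref{C4}, $p(a,0,b)=a+0\cdot(b-a)=a$ by the first auxiliary identity, while $p(b,1,a)=b+1\cdot(a-b)=b+(a-b)=a$ using the unit and commutativity of $+$. For \ref{C2}, $p(a,b,a)=a+b(a-a)=a+b\cdot 0=a$; note that this is precisely the step where the hypothesis $a\cdot 0=0$ is indispensable.

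The substance of the proof is \ref{C3}, and this is where I expect the only real work. I would expand each side separately. On the left, $p(b_1,b_2,b_3)=b_1+b_2(b_3-b_1)$, so right distributivity yields
$$p(a,p(b_1,b_2,b_3),c)=a+\big[b_1+b_2(b_3-b_1)\big](c-a)=a+b_1(c-a)+b_2(b_3-b_1)(c-a).$$
On the right, write $u=p(a,b_1,c)=a+b_1(c-a)$ and $v=p(a,b_3,c)=a+b_3(c-a)$. The key manipulation is to compute $v-u$: using commutativity of $+$ it equals $b_3(c-a)-b_1(c-a)$, and then the second auxiliary identity together with right distributivity factors this as $(b_3-b_1)(c-a)$. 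Consequently, by associativity of $\cdot$,
$$p(u,b_2,v)=u+b_2(v-u)=a+b_1(c-a)+b_2(b_3-b_1)(c-a),$$
which coincides with the left-hand expression.

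The only delicate point is the factorisation $b_3(c-a)-b_1(c-a)=(b_3-b_1)(c-a)$, which simultaneously uses right distributivity and the identity $(-x)y=-(xy)$; this is exactly the place where both the \emph{right} near-ring structure and the commutativity of addition are essential, while everything else is routine bookkeeping.
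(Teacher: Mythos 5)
Your proof is correct and is precisely the direct verification that the paper leaves to the reader (its own proof consists only of the sentence ``The proof is straightforward''). Your two auxiliary identities $0\cdot x=0$ and $(-x)y=-(xy)$, the observation that the hypothesis $a\cdot 0=0$ is needed exactly for \ref{C2}, and the factorisation $b_3(c-a)-b_1(c-a)=(b_3-b_1)(c-a)$ in the verification of \ref{C3} are exactly the right ingredients, each step using only right distributivity, associativity of $\cdot$, commutativity of $+$, and the unit.
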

\begin{proof} The proof is straightforward.\end{proof}

When \mbox{$a+b(c-a)$} is changed to \mbox{$(1-b)a+bc$} and $1-b$ is replaced by $1+b$, the formula for $p$, presented in Theorem \ref{thm:2} below, is obtained. 
Recall that a ring (or a near-ring) of characteristic 2 is such that $b+b=0$ for all $b$, so that $1-b=1+b$. Moreover, rewriting $(1+b)a+bc$ as $a+b(a+c)$ gives the formula used in Theorem \ref{thm:3}.
\begin{theorem}\label{thm:2}
Suppose that $(A,p,0,1)$ satisfies axioms \ref{C1} to \ref{C3}. For 
$$\bar{a}=p(1,a,0),\ a\cdot b=p(0,a,b)\ \, \textrm{and}\ \, a+b=p(a,b,\bar a),$$
the following conditions are equivalent:
\begin{enumerate}[label={\rm(\roman*)}]
\item\label{T1a} $(A,+,\cdot,0,1)$ is a unitary ring of characteristic $2$
\item\label{T2a} $p(a,b,c)=(\bar b\cdot a) + (b\cdot c)$
\item\label{T3a} $a\cdot(b+c)=(a\cdot b)+(a\cdot c)$.
\end{enumerate}
\end{theorem}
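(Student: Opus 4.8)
The plan is to prove the cyclic chain $\ref{T1a} \Rightarrow \ref{T2a} \Rightarrow \ref{T3a} \Rightarrow \ref{T1a}$, mirroring the structure already used successfully in Theorem \ref{thm:1} and Theorem \ref{thm:DM}. The fundamental tool throughout is the monoid structure on $(A,+,0)$ from (\ref{monoid-plus}) together with property (\ref{L11}), which gives $a+1 = \bar a = 1+a$, and the interaction of $+$ with $\cdot$ provided by Lemma \ref{lemma:a+a}. Note that in characteristic $2$ we have $a+a=0$ for all $a$, so Lemma \ref{lemma:a+a} immediately yields commutativity of $+$ and right distributivity $(a+b)\cdot c = a\cdot c + b\cdot c$; this means the content of \ref{T3a} is precisely the \emph{left} distributivity that Lemma \ref{lemma:a+a} explicitly does not guarantee.

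For $\ref{T1a}\Rightarrow\ref{T2a}$, I would work directly from the ring axioms rather than using a uniqueness argument. Starting from the definition $p(a,b,c) = a + b(c-a)$ of the near-ring case (Proposition \ref{prop}), which in a genuine ring of characteristic $2$ becomes $p(a,b,c) = a + b(a+c)$ since $-a = a$ and subtraction equals addition, I would expand using left distributivity (now available, as we are in a ring) to get $a + ba + bc = (1+b)a + bc = (\bar b \cdot a) + (b\cdot c)$, where the last step uses $1+b = \bar b$ from (\ref{L11}). The only subtlety is to confirm that the ternary operation $p$ coming from the structure $(A,p,0,1)$ actually coincides with the ring-theoretic $a+b(a+c)$; this should follow from the derived-operation definitions in Definition \ref{definition:1} combined with the ring axioms, and is the kind of routine unwinding the paper performs elsewhere.

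For $\ref{T2a}\Rightarrow\ref{T3a}$, I would substitute the formula \ref{T2a} into the appropriate instances of the ternary operation and use axiom \ref{C3} to pull distributivity out. A natural route is to evaluate $p(0, b_2, c)$-type expressions or, more directly, to compute $a\cdot(b+c) = p(0,a,b+c) = p(0,a,p(b,c,\bar b))$ and apply \ref{C3} to distribute $p(0,a,-)$ across the inner $p$; comparing the result against $(a\cdot b)+(a\cdot c)$ expressed through \ref{T2a} should close the gap. The expected main obstacle is the final implication $\ref{T3a}\Rightarrow\ref{T1a}$: here one must upgrade the partial distributivity of Lemma \ref{lemma:a+a} (which only gave right distributivity) to a full ring by combining it with the assumed left distributivity \ref{T3a}, and then verify associativity and the remaining ring axioms for $\cdot$. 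The key leverage is that $a+a = p(a,a,\bar a) = 0$ holds automatically under \ref{C2} (as noted after Lemma \ref{lemma:1}, $1+1=0$, and one checks $a+a=0$ in general), so characteristic $2$ and commutativity of $+$ come for free; what genuinely requires \ref{T3a} is promoting $(A,\cdot)$ to an associative multiplication distributing on both sides, at which point $(A,+,\cdot,0,1)$ assembles into a unitary ring of characteristic $2$.
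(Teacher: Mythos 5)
Your cyclic strategy matches the shape of the paper's argument, but all three of your implications have the same missing ingredient: the identity the paper isolates as equation (\ref{magic}), namely $a+p(a,b,c)=b\cdot(a+c)$ whenever $a+a=0$, obtained by writing $a+p(a,b,c)=p(a,p(a,b,c),\bar a)$ and applying \ref{C3} to get $p(p(a,a,\bar a),b,p(a,c,\bar a))=p(0,b,a+c)$. This is the only mechanism for expressing the primitive operation $p$ through its derived operations, and without it none of your steps closes. In \ref{T1a}$\Rightarrow$\ref{T2a} you defer exactly this point as ``routine unwinding'': knowing that the \emph{derived} operations form a ring of characteristic $2$ says nothing, by itself, about how $p$ evaluates on a general triple, and Proposition \ref{prop} cannot be invoked because it goes in the opposite direction (it builds a $p$ from a given near-ring; it does not show the given $p$ equals that formula). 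The identification $p(a,b,c)=a+b\cdot(a+c)$ \emph{is} the content of the theorem, and it is precisely (\ref{magic}) followed by adding $a$ to both sides. In \ref{T2a}$\Rightarrow$\ref{T3a}, your plan to expand $p(0,a,p(b,c,\bar b))$ by ``applying \ref{C3} to distribute $p(0,a,-)$ across the inner $p$'' fails on a technical point: \ref{C3} only decomposes a compound term sitting in the \emph{middle} slot, $p(a,p(b_1,b_2,b_3),c)=p(p(a,b_1,c),b_2,p(a,b_3,c))$; here the inner $p$ sits in the third slot, and even after flipping with (\ref{L3}) it lands in the first slot, so \ref{C3} never applies. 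The paper instead first extracts $a+a=0$ from \ref{T2a} (via $1=p(1,a,1)=\bar a+a$, using \ref{C2} and (\ref{L11})) and then combines (\ref{magic}) with \ref{T2a} and Lemma \ref{lemma:a+a}.

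The third step contains an outright error rather than a gap: you claim $a+a=p(a,a,\bar a)=0$ ``holds automatically'' under \ref{C2}. It does not --- \ref{C2} requires the first and third arguments to coincide, and $\bar a\neq a$ in general. Nor can it be automatic: de Morgan algebras satisfy \ref{C1}--\ref{C3} (Theorem \ref{thm:iso}), yet there $a+a=a\cdot\bar a$, which is nonzero in general (e.g.\ the divisors of $4$ with $\gcd$, $\mathrm{lcm}$ and $\bar a=4/a$ give $2+2=2$); only $1+1=\bar 1=0$ is free. The correct derivation uses \ref{T3a} itself: $a+a=a\cdot(1+1)=a\cdot 0=0$ by (\ref{L11}) and (\ref{L9}), after which Lemma \ref{lemma:a+a}, (\ref{monoid-plus}) and (\ref{L7}) assemble the ring. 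You also have the division of labor reversed at the end: associativity of $\cdot$ costs nothing (it is (\ref{L7}) of Lemma \ref{lemma:1}); what genuinely requires \ref{T3a} is $a+a=0$ --- hence additive inverses, commutativity of $+$ and right distributivity --- together with left distributivity, which is \ref{T3a} itself.
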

\begin{proof}
It is clear that \ref{T1a} implies \ref{T3a}. By (\ref{L11}), \ref{T3a} implies $a+a=a\cdot (1+1)=a\cdot0=0$ and hence, considering (\ref{L7}), (\ref{monoid-plus}) and Lemma \ref{lemma:a+a}, \ref{T3a} implies~\ref{T1a}. Moreover, when~$a+a=0$:
\begin{equation}\label{magic}
a+p(a,b,c)=p(a,p(a,b,c),\bar a)\ =\ p(p(a,a,\bar a),b,p(a,c,\bar a))=b\cdot(a+c).
\end{equation}
Consequently, \ref{T3a} implies \ref{T2a} as $p(a,b,c)=a+ba+bc=(1+b)a+bc.$
It remains to prove that \ref{T2a} implies~\ref{T3a}. By \ref{C2}, (\ref{monoid-plus}) and (\ref{L11}), \ref{T2a} implies $1=p(1,a,1)=\bar a+a=1+a+a$ i.e. $a+a=0$. Then (\ref{magic}) and Lemma \ref{lemma:a+a} imply left distributivity:
$$
a\cdot(b+c)=b+p(b,a,c)=b+(1+a)b+ac=ab+ac.
$$
\end{proof}
\noindent The unique non-commutative ring of order $8$, say consisting of all upper triangular binary  $2$-by-$2$ matrices, illustrates Theorem \ref{thm:2}. Note that addition is the Boolean symmetric difference as in a Boolean ring.

\begin{theorem}\label{thm:3}
Suppose that $(A,p,0,1)$ satisfies axioms \ref{C1} to \ref{C3}. For 
$$\bar{a}=p(1,a,0),\ a\cdot b=p(0,a,b)\ \, \textrm{and}\ \, a+b=p(a,b,\bar a),$$
the following conditions are equivalent:
\begin{enumerate}[label={\rm(\roman*)}]
\item\label{T1b} $(A,+,\cdot,0,1)$ is a unitary (right) near ring of characteristic~$2$
\item\label{T2b} $p(a,b,c)=a+(b\cdot(a+c))$
\item\label{T3b} $a+a=0$
\item\label{T4b} $(a+b)\cdot c=(a\cdot c)+(b\cdot c)$.
\end{enumerate}
\end{theorem}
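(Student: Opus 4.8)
The plan is to prove the cyclic chain $\ref{T1b} \Rightarrow \ref{T2b} \Rightarrow \ref{T3b} \Rightarrow \ref{T4b} \Rightarrow \ref{T1b}$, closely mirroring the structure of the proof of Theorem \ref{thm:2} but adapted to the weaker near-ring setting, where only right distributivity is available. For $\ref{T1b} \Rightarrow \ref{T2b}$, I would start from a near ring of characteristic $2$, so that $a+a=0$ for all $a$, and hence $\bar{a}=a+1=1+a$ by \ref{L11}. The key computation is to expand $a+(b\cdot(a+c))$ using right distributivity and characteristic $2$: writing $b\cdot(a+c)$ and recalling that $a+a=0$, I expect $a+(b\cdot(a+c))$ to reduce to the ternary operation $p(a,b,c)$. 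Here I would exploit the identity $(\ref{magic})$ from Theorem \ref{thm:2}, which states $a+p(a,b,c)=b\cdot(a+c)$ whenever $a+a=0$; adding $a$ to both sides and using associativity and $a+a=0$ from \ref{monoid-plus} gives $p(a,b,c)=a+(b\cdot(a+c))$ directly, so this implication is essentially immediate once characteristic $2$ is in hand.

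For $\ref{T2b}\Rightarrow\ref{T3b}$, the argument parallels the corresponding step in Theorem \ref{thm:2}: I would substitute special values into the formula $\ref{T2b}$. Evaluating $p$ at a point forced by axiom \ref{C2} should pin down $a+a$. Concretely, using \ref{C2} to compute $p(1,a,1)$ and comparing with what $\ref{T2b}$ yields, together with \ref{L11} and the monoid structure \ref{monoid-plus}, I expect to derive $a+a=0$. The implication $\ref{T3b}\Rightarrow\ref{T4b}$ is already furnished by Lemma \ref{lemma:a+a}, which states precisely that $a+a=0$ forces $+$ to be commutative and $\cdot$ to be right distributive over $+$; so this step simply cites that lemma.

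The remaining implication $\ref{T4b}\Rightarrow\ref{T1b}$ is where the main work lies. From right distributivity $\ref{T4b}$ I must recover that $(A,+,\cdot,0,1)$ is a genuine unitary right near ring of characteristic $2$. By \ref{L11}, setting appropriate arguments in $\ref{T4b}$ should again yield $a+a=0$, giving characteristic $2$; then Lemma \ref{lemma:a+a} supplies commutativity of $+$, and together with \ref{monoid-plus} this makes $(A,+,0)$ an abelian group of exponent $2$. The associativity and identity laws for $\cdot$ come from \ref{L7} via the identification of $\cdot$ with the monoid operation, and right distributivity is the hypothesis $\ref{T4b}$ itself. The delicate point is verifying that all the near-ring axioms hold simultaneously and in the correct (right-sided) form, since unlike the full ring case of Theorem \ref{thm:2} we do \emph{not} assert left distributivity; indeed the remark following Lemma \ref{lemma:a+a} emphasizes that left distributivity of $\cdot$ over $+$ is not guaranteed.

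The step I expect to be the main obstacle is $\ref{T4b}\Rightarrow\ref{T1b}$, specifically ensuring that the structure assembled from Lemmas \ref{lemma:1} and \ref{lemma:a+a} satisfies exactly the axioms of a right near ring and nothing stronger. The subtlety is to confirm that the one-sided distributivity inherent in the near-ring definition is correctly matched by the one-sided distributivity $\ref{T4b}$ that Lemma \ref{lemma:a+a} produces, and that no hidden commutativity of $\cdot$ or left distributivity is being smuggled in. Once that correspondence is checked, the equivalence closes into a cycle and the theorem follows.
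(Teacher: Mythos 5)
Your plan is, in substance, the paper's own proof: the same three tools carry every step, namely (\ref{magic}) for producing the formula \ref{T2b}, Lemma \ref{lemma:a+a} for \ref{T3b} $\Rightarrow$ \ref{T4b} and for assembling the near ring, and (\ref{L11}) together with (\ref{L9}) for extracting $a+a=0$ from \ref{T4b}. The paper arranges the implications as \ref{T1b} $\Leftrightarrow$ \ref{T3b}, \ref{T3b} $\Leftrightarrow$ \ref{T2b}, \ref{T3b} $\Leftrightarrow$ \ref{T4b} rather than as your cycle, but that difference is immaterial. Your worry that \ref{T4b} $\Rightarrow$ \ref{T1b} is the delicate part is also unfounded: Lemma \ref{lemma:a+a}, (\ref{monoid-plus}), (\ref{L7}), (\ref{L9}) and \ref{T4b} itself supply exactly the unitary right near-ring axioms, and no left distributivity enters anywhere.

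The one genuine flaw is your concrete recipe for \ref{T2b} $\Rightarrow$ \ref{T3b}. You propose to evaluate $p(1,a,1)$ via \ref{C2}, imitating the corresponding step of Theorem \ref{thm:2}. That substitution is vacuous here: with $p(a,b,c)=a+(b\cdot(a+c))$ one gets
\[
1 = p(1,a,1) = 1+\bigl(a\cdot(1+1)\bigr) = 1+(a\cdot 0) = 1+0 = 1,
\]
since $1+1=\bar 1=0$ by (\ref{L11}) and (\ref{L1}), $a\cdot 0=0$ by (\ref{L9}), and $1+0=1$ by (\ref{monoid-plus}); it returns the tautology $1=1$ and says nothing about $a+a$. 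The trick worked in Theorem \ref{thm:2} because there the formula is $(\bar b\cdot a)+(b\cdot c)$, so that $p(1,a,1)=\bar a+a=1+a+a$, which does pin down $a+a$; here setting the outer arguments to $1$ annihilates the factor $a+c$ and with it all dependence on the middle argument. The repair is to use \ref{C4} instead, which is exactly what the paper does: \ref{C4} gives $p(a,1,0)=0$, while \ref{T2b} gives $p(a,1,0)=a+\bigl(1\cdot(a+0)\bigr)=a+a$ by (\ref{L7}) and (\ref{monoid-plus}), hence $a+a=0$. With this single substitution corrected, your proof closes.
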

\begin{proof}
It is clear that \ref{T1b} implies \ref{T3b} and, considering Lemma~\ref{lemma:a+a} and associativity of $+$, \ref{T3b} implies~\ref{T1b}. Using (\ref{magic}), condition \ref{T3b} implies \ref{T2b}:
\[
a+p(a,b,c)=b\cdot(a+c)=a+(a+b\cdot(a+c)).
\]
Using (\ref{monoid-plus}) and \ref{C4}, condition \ref{T2b} implies \ref{T3b}: $0=p(a,1,0)=a+a.$
Properties (\ref{L11}) and Lemma~\ref{lemma:a+a} show that \ref{T3b} and \ref{T4b} are equivalent.
\end{proof}
\noindent Several examples of unitary (right) near-rings of characteristic 2 with four elements can be found. The following example illustrates Theorem \ref{thm:3}. Multiplication is neither commutative nor idempotent and addition is the same as Boolean symmetric difference. Note that if the formula $(1+y)x+yz$ is used as the ternary operation $p$ instead of $x+y(x+z)$ then \ref{C3} is not satisfied.
\[
\begin{array}{c|cccc}
\cdot & 0 & u & v & 1 \\ 
\hline 
0  & 0 & 0 & 0 & 0 \\ 
u & 0 & 0 & 0 & u \\ 
v & 0 & u & v & v \\ 
1 & 0 & u & v & 1 \\ 
\end{array} \qquad\quad
\begin{array}{c|cccc}
+ & 0 & u & v & 1 \\ 
\hline 
0  & 0 & u & v & 1 \\ 
u  & u & 0 & 1 & v \\ 
v  & v & 1 & 0 & u \\ 
1  & 1 & v & u & 0 
\end{array}
\] 

\section{MV algebra}\label{section:MV}   

An MV-algebra is a structure of type $(2,1,0)$ that can be defined as follows.
\begin{definition}\label{def:MV}
A MV-algebra is a system $(X,\circ,\ov{()},0)$ such that:
\begin{enumerate}[label={\bf (M\arabic*)}]
\item\label{MV-ass} $x\circ(y\circ z)=(x\circ y)\circ z$
\item\label{MV-unit} $0\circ x=x$
\item\label{MV-inv} $\ov{\ov x}=x$
\item\label{MV-abs} $\ov0\circ x =\ov 0$
\item\label{MV-com} $x\circ\ov{x\circ\ov y}=y\circ\overline{y\circ\ov x}$
\end{enumerate}
\end{definition}
\noindent Usually, the commutativity of the binary operation $\circ$ is included as an axiom of MV-algebras. Nevertheless, Kola\v{r}\'{i}k~\cite{kolarik} proved that commutativity of $\circ$ is a consequence of the other axioms. The canonical example of a MV-algebra consist of the set $X=[0,1]$ and the operations $\bar x=1-x$ and $x\circ y=min(x+y,1)$.

Next, some well-known properties of MV-algebras are presented.
\begin{proposition}\label{properties-MV}
Let $(X,\circ,\overline{()},0)$ be a MV-algebra, and consider the following notations:
\begin{flalign*}
1&=\bar0\\
x\cdot y &=\overline{\bar y\circ\bar x}\\
x\vee y &=x\circ(y\cdot \bar x)\\
x\wedge y&=(\bar y\circ x)\cdot y.
\end{flalign*}
Then, the following properties hold:
\begin{itemize}
\item $x\circ y=y\circ x$ and $x\cdot y=y\cdot x$
\item $x\circ \bar x=1$ and $x\cdot \bar x=0$
\item $x\vee y=\overline{\bar y\wedge\bar x}$
\item $(X,\vee,\wedge,0,1)$ is a distributive lattice
\item $(X,\circ,\cdot,\overline{()},0,1)$ is a Boolean algebra $\Leftrightarrow x\circ y=x\vee y$ $\Leftrightarrow x\circ x=x$
\item $x\circ(y\wedge z)=(x\circ y)\wedge(x\circ z)$ and $x\cdot(y\vee z)=(x\cdot y)\vee(x\cdot z)$.
\end{itemize}
\end{proposition}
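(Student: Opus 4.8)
The plan is to establish each listed property of the MV-algebra $(X,\circ,\overline{()},0)$ directly from axioms \ref{MV-ass}--\ref{MV-com} together with the definitions of $1$, $\cdot$, $\vee$, and $\wedge$, leaning on Kola\v{r}\'{i}k's result that $\circ$ is commutative (so that I may freely use $x\circ y=y\circ x$ throughout). The overall structure mirrors the standard development of MV-algebra theory (as in Chang's original work and the Cignoli--D'Ottaviano--Mundici monograph), so most individual items are routine once the foundational facts are in place; the art is in sequencing them so that each step only invokes what has already been proved.

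First I would record the basic identities: from \ref{MV-inv} the map $\overline{(\,)}$ is an involution, so $\bar 1=0$ and $x\circ y=y\circ x$ transfers to commutativity of $\cdot$ via its definition $x\cdot y=\overline{\bar y\circ\bar x}$. The identity $x\circ\bar x=1$ is the key computation: applying \ref{MV-com} with $y=0$ and using \ref{MV-unit} and \ref{MV-inv} gives $x\circ\overline{x\circ\bar 0}=0\circ\overline{0\circ\bar x}=\overline{\bar x}=x$, and then $\overline{x\circ\bar 0}=\overline{x\circ 1}$; combined with the absorption axiom \ref{MV-abs} (which yields $x\circ 1=1$, hence $\overline{x\circ1}=0$) this collapses to identities from which $x\circ\bar x=1$ follows, and dualizing through $\overline{(\,)}$ produces $x\cdot\bar x=0$. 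Next I would verify $x\vee y=\overline{\bar y\wedge\bar x}$ by expanding both sides through the definitions and the de Morgan relation $x\cdot y=\overline{\bar y\circ\bar x}$, which is a direct symbolic manipulation.

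The substantial item is that $(X,\vee,\wedge,0,1)$ is a distributive lattice. Here I would first show $\vee$ and $\wedge$ are idempotent, commutative, and associative, and that the absorption laws hold; commutativity of $\vee$ is precisely a reformulation of axiom \ref{MV-com} (each side equals $x\circ\overline{x\circ\bar y}$ after unwinding $y\cdot\bar x$), which is the cleanest payoff of that axiom. Associativity and distributivity are the genuinely laborious parts and are where I expect the main obstacle to lie: they require careful juggling of $\circ$, $\overline{(\,)}$, and the derived operations, and in the standard treatments these are proved via an auxiliary partial order $x\le y\iff x\vee y=y$ (equivalently $\bar x\circ y=1$). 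I would therefore introduce this order, show it is a partial order compatible with the operations, and derive the lattice and distributivity laws from order-theoretic lemmas rather than by brute expansion.

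For the Boolean-algebra characterization, I would prove the chain of equivalences $(X,\circ,\cdot,\overline{()},0,1)\text{ Boolean}\iff x\circ y=x\vee y\iff x\circ x=x$: idempotency of $\circ$ forces $x\vee y=x\circ(y\cdot\bar x)$ to coincide with $x\circ y$ using $x\circ\bar x=1$ and the lattice laws, and when $\circ=\vee$ the structure inherits complementation from $x\circ\bar x=1$, $x\cdot\bar x=0$ together with the already-established distributive lattice, giving a Boolean algebra; the converse implications are immediate since Boolean join is idempotent. Finally, the two distributive identities $x\circ(y\wedge z)=(x\circ y)\wedge(x\circ z)$ and its dual follow from monotonicity of $\circ$ in the partial order plus the distributivity of the lattice, so I would defer them to the end once the order-theoretic machinery is available. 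Throughout, the recurring technical difficulty is keeping the de Morgan duality between $\circ/\cdot$ and $\vee/\wedge$ bookkept correctly, so I would prove the $\circ$-side statements and obtain the $\cdot$-side ones by applying $\overline{(\,)}$ rather than repeating each argument.
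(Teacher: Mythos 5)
The paper itself offers no proof of this proposition (it simply cites Cignoli--D'Ottaviano--Mundici), so your plan to redo the standard development is legitimate in spirit. But the one computation you actually spell out --- what you call ``the key computation'' for $x\circ\bar x=1$ --- fails. Substituting $y=0$ into (M5) gives, on the left, $x\circ\overline{x\circ\bar 0}=x\circ\overline{x\circ 1}=x\circ\bar 1=x\circ 0=x$ (using (M4) for $x\circ 1=1$, then (M3) for $\bar1=0$, then (M2) with commutativity), and on the right, $0\circ\overline{0\circ\bar x}=\overline{\bar x}=x$. Both sides collapse to $x$: the substitution $y=0$ yields only the tautology $x=x$ and carries no information about $x\circ\bar x$. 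The correct substitution is $y=1=\bar 0$: then the left side of (M5) becomes $x\circ\overline{x\circ\bar 1}=x\circ\overline{x\circ 0}=x\circ\bar x$, while the right side is $1\circ\overline{1\circ\bar x}=1\circ\bar 1=1\circ 0=1$, by two applications of (M4) and (M3). Hence $x\circ\bar x=1$. Since you derive $x\cdot\bar x=0$, the complementation facts in the Boolean characterization, and part of the lattice structure from this identity, the error propagates through your outline; it is fixable by the $y=1$ substitution, but as written the step is broken.

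A secondary gap: for the final item, $x\circ(y\wedge z)=(x\circ y)\wedge(x\circ z)$, monotonicity of $\circ$ with respect to the order $x\le y\iff\bar x\circ y=1$ gives only the inequality $x\circ(y\wedge z)\le(x\circ y)\wedge(x\circ z)$; the reverse inequality is the substantive half and does not follow from lattice distributivity plus monotonicity alone --- in the standard reference it requires its own argument with specific MV-identities. The rest of your sequencing (commutativity via Kola\v{r}\'{i}k, which the paper itself invokes; $x\vee y=\overline{\bar y\wedge\bar x}$ by direct expansion; commutativity of $\vee$ as a restatement of (M5); the lattice and distributivity via the induced partial order; the Boolean characterization via idempotency of $\circ$) is sound and matches the treatment the paper delegates to its citation.
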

\begin{proof}
See for instance \cite{Cig}.
\end{proof}

Considering this properties and Lemma \ref{lemma:complement}, if a MV-algebra comes from a \ref{C1}--\ref{C3} ternary structure, then it is a Boolean algebra. This means that a ternary structure isomorphic to a general MV-algebra cannot contain the full \mbox{\ref{C1}--\ref{C3}} structure. We propose here to replace Axiom \ref{C3} by some of its consequences, namely particular cases of properties (\ref{L5}) and (\ref{L7}). This implies that properties (\ref{L3}) and (\ref{L=}) will not apply in general and, consequently, the operations $\circ$ and $\vee$ will be different. We call the resulting structure a Ternary MV-algebra. 

\begin{definition}\label{definition:ternary-MV}
A Ternary MV-algebra is a system $(A,p,0,1)$ consisting of a set $A$, together with a ternary operation $p:A\times A\times A\to A$ and two constants $0,1\in A$ satisfying:
\begin{enumerate}[label={\bf \quad(T\arabic*)},leftmargin=*,labelindent=0.3em]
\item $p(0,a,1)=a$
\item $p(a,0,b)=a=p(b,1,a)$
\item $p(a,b,a)=a$
\item[{\bf (T4-1)}]$p(a,p(b,c,1),1)=p(p(a,b,1),c,1)$
\item[{\bf (T4-2)}]$p(0,b,c)=\overline{p(\bar c,\bar b,1)}$ and $p(1,b,c)=\overline{p(\bar c,\bar b,0)}$
\item[{\bf (TMV)}]$p(a,b,c)=p(p(\bar b,\bar a,0),p(0,c,b),1)$
\end{enumerate}
\end{definition}

In the next propositions, it is shown that a Ternary MV-algebra is isomorphic to a MV-algebra.
\begin{proposition}\label{thm:MV-1}
Let $(A,p,0,1)$ be a Ternary MV-algebra.
Then, for $\bar{a}=p(1,a,0)$ and $a\circ b=p(a,b,1)$, the structure $(A,\circ,\overline{()},0)$ is a MV-algebra.
\end{proposition}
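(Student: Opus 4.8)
The plan is to verify directly that the five MV-algebra axioms \ref{MV-ass}--\ref{MV-com} hold for $(A,\circ,\overline{()},0)$ once we define $a\circ b = p(a,b,1)$ and $\bar a = p(1,a,0)$, using only the axioms available in a Ternary MV-algebra, namely \ref{C1}, \ref{C4}, \ref{C2}, the associativity fragment {\bf (T4-1)}, the de Morgan fragment {\bf (T4-2)}, and the characteristic expression {\bf (TMV)}. The key observation is that although the full axiom \ref{C3} is unavailable, each MV-axiom corresponds to precisely one of the retained conditions, so the proof should be a matched checklist rather than a search.

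First I would dispatch the easy axioms. Associativity \ref{MV-ass}, $x\circ(y\circ z)=(x\circ y)\circ z$, is exactly {\bf (T4-1)} after unfolding $\circ$, so it holds verbatim. The unit law \ref{MV-unit}, $0\circ x=x$, unfolds to $p(0,x,1)=x$, which is \ref{C1}. Involution \ref{MV-inv}, $\overline{\overline x}=x$, requires proving $\overline{\overline a}=a$; I would check whether the proof of (\ref{L2}) in Lemma~\ref{lemma:1} survives with the weakened axioms, and if it uses full \ref{C3} I would instead derive involution from {\bf (T4-2)} by specialising its two identities (for instance, setting $b$ or $c$ to $0$ or $1$ and combining with \ref{C4}). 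The absorption axiom \ref{MV-abs}, $\bar 0\circ x=\bar 0$, unfolds to $p(1,x,1)=1$ after first establishing $\bar 0 = p(1,0,0)=1$ via \ref{C4}; the identity $p(1,x,1)=1$ is precisely (\ref{L10}), whose Lemma~\ref{lemma:1} proof used only \ref{C2}, so it carries over unchanged.

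The main obstacle will be the commutativity-flavoured axiom \ref{MV-com}, $x\circ\overline{x\circ\bar y}=y\circ\overline{y\circ\bar x}$, since this is the axiom that genuinely encodes the MV-structure and is where {\bf (TMV)} must do its work. My plan is to rewrite both sides as ternary expressions and reduce them to a common form. Unfolding, the left side is $p\bigl(x,\overline{p(x,\bar y,1)},1\bigr)$; I would use {\bf (T4-2)} to move complements through $p$ and convert $\overline{p(x,\bar y,1)}$ into an expression of the form $p(y,\bar x,0)$ or similar, then apply {\bf (TMV)} to collapse the nested ternary term into a symmetric expression in $x$ and $y$. The goal is to show each side equals one fixed ternary term built from $p(\cdot,\cdot,0)$ and $p(0,\cdot,\cdot)$ pieces that is manifestly symmetric under swapping $x\leftrightarrow y$; {\bf (TMV)} is the only axiom mixing the two ``ends'' of $p$, so it is the essential ingredient forcing this symmetry.

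Concretely, I would first prove two or three small helper identities from {\bf (T4-2)} and {\bf (TMV)}, such as $\overline{a\circ b}=\bar b\cdot\bar a$ (where $a\cdot b=p(0,a,b)$) and the commutativity of $\cdot$, since these are the standard stepping stones to \ref{MV-com} and each should follow from a single application of {\bf (T4-2)} or {\bf (TMV)} together with involution. With commutativity of $\cdot$ in hand, the expression $x\circ\overline{x\circ\bar y}$ becomes recognisable as the MV join $x\vee y$ in disguise, and \ref{MV-com} reduces to the commutativity of that join, which {\bf (TMV)} supplies. I expect the bookkeeping of complements to be the only delicate part; the structural content is entirely carried by {\bf (TMV)}.
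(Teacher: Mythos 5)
Your proposal is correct and follows essentially the same route as the paper's proof: (M1), (M2), (M4) are read off directly from \textbf{(T4-1)}, \ref{C1} and \ref{C2}, involution is obtained by specialising \textbf{(T4-2)} (your fallback is indeed required, since the Lemma~\ref{lemma:1} proof of (\ref{L2}) uses the unavailable \ref{C3}), and (M5) is handled exactly as you describe, by shuttling complements with \textbf{(T4-2)} and collapsing with \textbf{(TMV)}. In particular, the paper reduces both sides of (M5) to the identity $p(1,\bar x,y)=p(1,\bar y,x)$ --- the commutativity of the join $x\vee y=x\circ\overline{x\circ\bar y}$ --- and derives it from a second application of \textbf{(TMV)} (via $p(\bar y,x,0)=p(\bar x,y,0)$), which is precisely the role you assign to \textbf{(TMV)} as the one axiom forcing the $x\leftrightarrow y$ symmetry.
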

\begin{proof}
It is clear that $0$ and $1$ are still complements of each other:

$\bar0=p(1,0,0)\ \eq{\ref{C4}}\ 1,\ \bar1=p(1,1,0)\ \eq{\ref{C4}}\ 0$.

Now, we can prove the 5 axioms of Definition \ref{def:MV}:

(M1) $x\circ(y\circ z)=p(x,p(y,z,1),1)\ \,\eq{{\bf (T4-1)}}\ \ p(p(x,y,1),z,1)=(x\circ y)\circ z$

(M2) $0\circ x=p(0,x,1)\ \eq{\ref{C1}}\ x$

(M3) $x\eq{\ref{C1}}p(0,x,1)\ \,\eq{{\bf (T4-2)}}\ \ \overline{p(\bar1,\bar x,\bar0)}\ 
     \eq{\ref{C4}}\ \overline{p(0,\bar x,1)}\ \eq{\ref{C1}}\ \overline{\overline{x}}$

(M4) $1\circ x=p(1,x,1)\ \eq{\ref{C2}}\,1$
\begin{flalign*}
(\textrm{M}5)\,x\circ\overline{(x\circ\bar y)}&=p(x,\overline{p(x,\bar y,1)},1)\
\eq{{\bf(T4-2)}}\ \,p(x,p(0,y,\bar x),1)\\
&\eq{\ref{C4}}\ p(p(x,\bar1,0),p(0,y,\bar x),1)\ \
\eq{{\bf (TMV)}}\ \ p(1,\bar x,y)\\ 
&\eq{{\bf(T4-2)}}\ \ \overline{p(\bar y,x,0)}\ \ \,
\eq{{\bf (TMV)}}\ \ \,\overline{p(p(\bar x,y,0),p(0,0,x),1)}\qquad\quad\\
&\eq{\ref{C4}}\ \overline{p(\bar x,y,0)}\ \
\eq{{\bf(T4-2)}}\ \,p(1,\bar y,x)\\
&=y\circ\overline{(y\circ\bar x)}.
\end{flalign*}
\end{proof}

\begin{proposition}\label{thm:MV-2}
Let $(A,\circ,\overline{()},0)$ be a MV-algebra, and consider the usual dual operation $a\cdot b=\overline{\bar b\circ\bar a}$. Then, for 
\begin{equation}\label{p-MV}
p(a,b,c)=((\bar a\circ\bar b)\cdot a)\circ(b\cdot c),
\end{equation}
the structure $(A,p,0,1)$ is a Ternary MV-algebra.
\end{proposition}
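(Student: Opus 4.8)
The plan is to verify directly that the ternary operation $p(a,b,c)=((\bar a\circ\bar b)\cdot a)\circ(b\cdot c)$ built from a given MV-algebra satisfies all six axioms of Definition~\ref{definition:ternary-MV}. Since we start from a genuine MV-algebra, every property collected in Proposition~\ref{properties-MV} is available to us, in particular commutativity of $\circ$ and of $\cdot$, the relations $x\circ\bar x=1$, $x\cdot\bar x=0$, the de~Morgan duality $x\cdot y=\overline{\bar y\circ\bar x}$, and the distributivity of $\circ$ and $\cdot$ over $\wedge$ and $\vee$. These will be the workhorses throughout. First I would dispose of the three easy axioms \ref{C1}, \ref{C4}, \ref{C2} by plugging in the relevant constants and simplifying using $0\circ x=x$, $1\circ x=1$, $x\cdot 0=0$, $x\cdot 1=x$, and $x\cdot\bar x=0$; for instance $p(0,a,1)=((\bar 0\circ\bar a)\cdot 0)\circ(a\cdot 1)=0\circ a=a$, and $p(a,b,a)$ collapses via $(\bar a\circ\bar b)\cdot a\le a$ together with $b\cdot a\le a$ and the absorption $a\circ(\text{something}\le a)=a$ once one checks both summands lie below $a$.

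Next I would tackle the two fragmentary associativity/involution axioms. For \textbf{(T4-2)}, the claim $p(0,b,c)=\overline{p(\bar c,\bar b,1)}$ reduces, after evaluating both sides, to a de~Morgan identity: the left side simplifies to $b\cdot c$, and the right side, after computing $p(\bar c,\bar b,1)$ and applying $\overline{()}$, should also yield $b\cdot c$ by repeated use of $\overline{\bar y\circ\bar x}=x\cdot y$ and involution; the second half $p(1,b,c)=\overline{p(\bar c,\bar b,0)}$ is entirely dual. For \textbf{(T4-1)}, since $p(x,y,1)=((\bar x\circ\bar y)\cdot x)\circ(y\cdot 1)=((\bar x\circ\bar y)\cdot x)\circ y$, I expect this to reduce to $x\circ y$ once one shows $(\bar x\circ\bar y)\cdot x\le x$ absorbs correctly, so that $p(x,y,1)=x\circ y$ outright; then \textbf{(T4-1)} is just the associativity \ref{MV-ass} of $\circ$. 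Establishing that $p(x,y,1)=x\circ y$ as a clean lemma early on will streamline everything, so I would prove it first.

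The main obstacle will be \textbf{(TMV)}, the self-referential identity $p(a,b,c)=p(p(\bar b,\bar a,0),p(0,c,b),1)$. Here I would first reduce the inner terms: $p(\bar b,\bar a,0)=((b\circ a)\cdot\bar b)\circ(\bar a\cdot 0)=(a\circ b)\cdot\bar b=b\wedge(a\circ b)$-type expression using the definitions of $\wedge$, while $p(0,c,b)=c\cdot b$, and the outer $p(-,-,1)$ equals $\circ$ of its first two arguments by the lemma above. The identity then becomes a purely MV-algebraic equation relating $((\bar a\circ\bar b)\cdot a)\circ(b\cdot c)$ to $((a\circ b)\cdot\bar b)\circ(c\cdot b)$, which I would verify by expanding via the MV distributive laws of Proposition~\ref{properties-MV} and the interplay of $\wedge$, $\vee$ with $\circ$ and $\cdot$; the equality $(\bar a\circ\bar b)\cdot a=(a\circ b)\cdot\bar b$ (recognizable as symmetry of the MV ``difference'' operation, essentially axiom \ref{MV-com} in disguise) is the crux and is exactly where the genuine MV content enters. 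I expect this single equation to be the only genuinely nontrivial computation, with all remaining verifications being routine substitutions.
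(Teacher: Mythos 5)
Your skeleton is essentially the paper's: compute the derived operations of $p$ (namely $p(0,b,c)=b\cdot c$, $p(\bar b,\bar a,0)=a\wedge \bar b$, and, as a preliminary lemma, $p(x,y,1)=x\circ y$), then verify the six axioms of Definition \ref{definition:ternary-MV} one by one, with \textbf{(T4-1)} reduced to associativity \ref{MV-ass} via the lemma. Your analysis of \textbf{(TMV)} is also exactly right: it reduces to $(\bar a\circ\bar b)\cdot a=(a\circ b)\cdot\bar b$, i.e.\ commutativity of $\wedge$, which is axiom \ref{MV-com} in disguise, and this is how the paper closes that case as well.

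However, there is a genuine gap in the mechanism you invoke both for \ref{C2} and for the key lemma: the ``absorption'' $a\circ z=a$ whenever $z\le a$. This is false in a general MV-algebra, because $\circ$ is the truncated sum, not the lattice join: in the canonical model $X=[0,1]$ with $x\circ y=\min(x+y,1)$, take $a=1/2$ and $z=1/3\le a$; then $a\circ z=5/6\neq a$. Worse, your principle applied with $z=a$ gives $a\circ a=a$, and by Proposition \ref{properties-MV} idempotency of $\circ$ forces the MV-algebra to be Boolean --- so the property you are assuming is precisely the one whose failure makes MV-algebras require the separate treatment of Section \ref{section:MV} rather than falling under Theorem \ref{thm:1}. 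As written, your verifications of $p(a,b,a)=a$ and of $p(x,y,1)=x\circ y$ therefore do not go through. Both are repairable, and the repair is what the paper does: recognize the relevant $\circ$-combinations as \emph{lattice} operations in disguise and use the distributivity laws of Proposition \ref{properties-MV} (absorption is legitimate for $\vee$, not for $\circ$). For the lemma:
\[
p(x,y,1)=((\bar x\circ\bar y)\cdot x)\circ y=(\bar y\wedge x)\circ y=(\bar y\circ y)\wedge(x\circ y)=1\wedge(x\circ y)=x\circ y,
\]
using $u\wedge v=(\bar v\circ u)\cdot v$ and the distributivity of $\circ$ over $\wedge$. For \ref{C2}: since $\bar a\circ\bar b=\overline{b\cdot a}$,
\[
p(a,b,a)=(\overline{b\cdot a}\cdot a)\circ(b\cdot a)=(b\cdot a)\vee a=(b\cdot a)\vee(1\cdot a)=(b\vee 1)\cdot a=1\cdot a=a,
\]
where the second equality is the definition $u\vee v=u\circ(v\cdot\bar u)$ (plus commutativity of $\circ$ and $\cdot$) and the fourth is distributivity of $\cdot$ over $\vee$.
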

\begin{proof}
First, from~(\ref{p-MV}) and using Proposition \ref{properties-MV}, we observe the following correspondences.
\begin{enumerate}[label={},leftmargin=*,labelindent=0.3em]
\item $p(1,a,0)=\bar a$
\item $p(0,a,b)=0\circ(a\cdot b)=a\cdot b$
\item $p(b,\bar a,0)=(\bar b\circ a)\cdot b=a\wedge b$
\item $p(1,\bar b,a)=b\circ(\bar b\cdot a)=b\circ(a\cdot \bar b)=b\vee a=a\vee b$
\item $p(a,b,1)=((\bar a\circ\bar b)\cdot a)\circ b=(\bar b\wedge a)\circ b=(\bar b\circ b)\wedge (a\circ b)=a\circ b.$
\end{enumerate}
Note that these results are compatible with definitions (\ref{def:bar}) to (\ref{def:vee}).
Now, we can prove the properties of $p$ included in Definition \ref{definition:ternary-MV}:
\begin{flalign*}
(\textrm{T}1)\, &p(0,a,1)=0\circ(a\cdot1)=a\\
(\textrm{T}2)\, &p(a,0,b)=(1\cdot a)\circ0=a\ \textrm{and}\ p(a,1,b)=(\bar a\cdot a)\circ b=0\circ b=b
\end{flalign*}
\begin{flalign*}
(\textrm{T}3)\, p(a,b,a)&=((\bar a\circ\bar b)\cdot a)\circ(b\cdot a)=(\overline{b\cdot a}\cdot a)\circ(b\cdot a)=(b\cdot a)\vee a\\
                        &=(b\cdot a)\vee (1\cdot a)=(b\vee1)\cdot a=1\cdot a=a
\end{flalign*}
\begin{flalign*}
(\textrm{T4-1})\, p(a,p(b,c,1),1)&=a\circ(b\circ c)=(a\circ b)\circ c=p(p(a,b,1),c,1)\quad
\end{flalign*}
\begin{enumerate}[leftmargin=3.5em]
\item[(T4-2)]$p(0,b,c)=b\cdot c=\overline{\bar c\circ\bar b}=\overline{p(\bar c,\bar b,1)}$
\item[]$p(1,b,c)=c\vee \bar b=\overline{b\wedge\bar c}=\overline{p(\bar c,\bar b,0)}$
\end{enumerate}
\begin{enumerate}[leftmargin=3.5em]
\item[(TMV)]$p(a,b,c)=((\bar a\circ\bar b)\cdot a)\circ(b\cdot c)=(\bar b\wedge a)\circ(c\cdot b)=(a\wedge \bar b)\circ(c\cdot b)$
\item[]\qquad\quad\ \, $=p(p(\bar b,\bar a,0),p(0,c,b),1)$.
\end{enumerate}
\end{proof}
\begin{theorem}\label{thm:MV-3}
Ternary MV-algebras and MV-algebras are isomorphic.
\end{theorem}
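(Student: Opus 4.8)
The plan is to build the isomorphism of categories from the two object-level constructions already in hand: Proposition~\ref{thm:MV-1} turns a Ternary MV-algebra $(A,p,0,1)$ into the MV-algebra $(A,\circ,\overline{()},0)$ with $\bar a=p(1,a,0)$ and $a\circ b=p(a,b,1)$, and Proposition~\ref{thm:MV-2} turns an MV-algebra into the Ternary MV-algebra given by formula~(\ref{p-MV}). It therefore remains to verify that these two assignments are mutually inverse on objects and that they identify the morphisms on each side, exactly as was done for de Morgan algebras in Theorem~\ref{thm:iso}.

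First I would dispatch the easy round-trip. Starting from an MV-algebra, forming $p$ through~(\ref{p-MV}) and then reading off $\bar a=p(1,a,0)$ and $a\circ b=p(a,b,1)$ recovers the original operations verbatim: these are precisely the correspondences $p(1,a,0)=\bar a$ and $p(a,b,1)=a\circ b$ already computed inside the proof of Proposition~\ref{thm:MV-2}. Hence this composite is the identity on MV-algebras.

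The substantive round-trip is the other one: beginning with a Ternary MV-algebra $(A,p,0,1)$, passing to its derived MV-algebra, and then reconstructing a ternary operation $p'$ via~(\ref{p-MV}) must return $p$. Writing the dual as $a\cdot b=\overline{\bar b\circ\bar a}$ and noting, via the first part of (T4-2), that it coincides with $p(0,a,b)$, one expands $p'(a,b,c)=((\bar a\circ\bar b)\cdot a)\circ(b\cdot c)=p\big(p(0,p(\bar a,\bar b,1),a),\,p(0,b,c),\,1\big)$. Comparing this with axiom (TMV), which reads $p(a,b,c)=p\big(p(\bar b,\bar a,0),\,p(0,c,b),\,1\big)$, reduces the claim $p'=p$ to two facts: the commutativity $p(0,b,c)=p(0,c,b)$ of the MV-dual (Proposition~\ref{properties-MV}), and the identity $p(\bar b,\bar a,0)=(\bar a\circ\bar b)\cdot a$. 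The latter is the crux, and it is the only place where (TMV) is genuinely used. I would obtain it by first invoking the second part of (T4-2) to write $p(\bar b,\bar a,0)=\overline{p(1,a,b)}$, then evaluating $p(1,a,b)$ by applying (TMV) with leading entry $1$: since $\bar 1=0$ and $p(\bar a,0,0)=\bar a$ by (T2), this collapses to $p(1,a,b)=p(\bar a,p(0,b,a),1)=\bar a\circ(a\cdot b)$; finally the MV de~Morgan laws give $\overline{\bar a\circ(a\cdot b)}=\overline{a\cdot b}\cdot a=(\bar a\circ\bar b)\cdot a$, as required. I expect this chain to be the main obstacle, since it is where the single non-lattice axiom (TMV) must be leveraged precisely.

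To finish with morphisms, I would observe that a map $f$ is an MV-algebra homomorphism iff it preserves $0$, $\overline{()}$ and $\circ$, while it is a Ternary MV-algebra homomorphism iff $f(0)=0$, $f(1)=1$ and $f$ commutes with $p$. Because each operation set is term-definable from the other --- $\bar a$ and $a\circ b$ from $p$, and conversely $p$ from $\circ$ and $\overline{()}$ through~(\ref{p-MV}), with $a\cdot b$, $\vee$ and $\wedge$ all term-definable from $\circ$ and $\overline{()}$ --- any map preserving one family automatically preserves the other, so the two notions of homomorphism coincide under the object correspondence. Together with the two round-trip identities, this yields the stated isomorphism of categories.
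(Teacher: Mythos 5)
Your proposal is correct and takes essentially the same route as the paper: both directions are handled by composing Propositions~\ref{thm:MV-1} and \ref{thm:MV-2}, the easy round trip is read off from the correspondences in Proposition~\ref{thm:MV-2}, the substantive round trip is reduced to axiom (TMV) after translating the MV-operations back into $p$-terms, and morphisms follow by term-definability as in Theorem~\ref{thm:iso}. If anything, your explicit derivation of the crux identity $p(\bar b,\bar a,0)=(\bar a\circ\bar b)\cdot a$ from (T4-2), the involution, and (TMV) instantiated with leading entry $1$ spells out the translation step (the identification $a\wedge\bar b=p(\bar b,\bar a,0)$, $c\cdot b=p(0,c,b)$) that the paper's chain of equalities uses without comment.
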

\begin{proof}
Let $(A,p,0,1)$ be a Ternary MV-algebra and consider the operations $\bar{a}=p(1,a,0)$ and $a\circ b=p(a,b,1)$. Then, by Proposition~\ref{thm:MV-1}, the structure $(A,\circ,\overline{()},0)$ is a MV-algebra. By Proposition \ref{thm:MV-2}, a new Ternary MV-algebra $(A,p',0,1)$ is recovered. We prove now that the new Ternary MV-algebra is equal to the original one i.e. that $p'=p$:
\begin{flalign*}
p'(a,b,c)&=((\bar a\circ\bar b)\cdot a)\circ(b\cdot c)=(\bar b\wedge a)\circ(b\cdot c)\\
         &=(a\wedge \bar b)\circ(c\cdot b)=p(p(\bar b,\bar a,0),p(0,c,b),1)=p(a,b,c).
\end{flalign*}
Conversely, let us begin with a MV-algebra $(A,\circ,\overline{()},0)$. Then, using Proposition~\ref{thm:MV-2}, a Ternary MV-algebra $(A,p,0,1)$ is obtain with 
$p(a,b,c)=((\bar a\circ\bar b)\cdot a)\circ(b\cdot c)$ which, by Proposition \ref{thm:MV-1}, gives back a MV-algebra $(A,\circ',\overline{()}',0)$. We prove now that $\circ'=\circ$ and $\overline{()}'=\overline{()}$:
\begin{flalign*}
a\circ'b &=p(a,b,1)\\
         &=((\bar a\circ\bar b)\cdot a)\circ b=(\bar b\wedge a)\circ b=(\bar b\circ b)\wedge (a\circ b)=a\circ b.\\
	\bar a'&=p(1,a,0)=((0\circ\bar a)\cdot 1)\circ(a\cdot 0)=\bar a.
\end{flalign*}
\end{proof}

To compare with equalities between binary operations observed in Boolean and de Morgan algebras, let us notice that, in a ternary MV-algebra, the following relations hold:
$$ a\wedge b=p(a,\bar b,b)\quad \textrm{and}\quad b\vee a=p(a,\bar a,b).$$
\noindent Note also that in a Ternary MV-algebra, the notion of a sum is still well defined through $a+b=p(a,b,\bar a)$, with $a+0=a=0+a$ and $a+1=\bar a=1+a$.

\section{conclusion}\label{section:conclusion}   
We have presented examples of ternary structures that provide a common background for several classical algebras. It has long been recognized that De Morgan algebras with Boolean complements or MV-algebras with idempotency are Boolean algebras. However, other characteristics of Boolean algebras, like the existence of a sum leading to Boolean rings are not so easily generalized (see \cite{Chajda} for an example). Here, the sum defined as $x+y=p(x,y,\bar x)$ is well defined in all
structures derived from a ternary system and allows for a generalization of the notion of a ring. With respect to the algebras considered here, we observe the following table:
\[
\begin{array}{c|c|c|c|c|c|}
 & T1-T3 & T4 & BC^{\dagger} & a\cdot a=a & LD^{*}\\ 
\hline 
Boolean\ algebra        & \checkmark & \checkmark & \checkmark & \checkmark & \checkmark\\ 
\hline 
De\ Morgan\ algebra     & \checkmark & \checkmark & \times     & \checkmark & \times\\ 
\hline 
MV-algebra          & \checkmark & \times     & \checkmark & \times     & \times\\ 
\hline 
Ring, char=2        & \checkmark & \checkmark & \times     & \times     & \checkmark\\ 
\hline 
Near\ ring, char=2  & \checkmark & \checkmark & \times     & \times     & \times\\ 
\hline 
\end{array}
\]
$^{\dagger}$ BC means Boolean complement: $(1+a)\cdot a=0$.\newline
$^{*}$ LD means left distributivity of $\cdot$ over $+$: $a\cdot(b+c)=(a\cdot b)+(a\cdot c).$\newline
Much more can be done in the study of structures involving the operation $+$.

In Section \ref{section:MV}, we explained how the initial ternary structure \ref{C1}--\ref{C3} is generalized to allow for MV-algebras. Of course, the structure can be modified in other ways given other type of structures. With only \ref{C1}--\ref{C2}, the unary operation $\bar x=p(1,x,0)$ is not an involution which could be a starting point to study a ternary version of Heyting algebras, for example.


\end{document}